\newtheorem{theorem}{Theorem}[section]
\newtheorem{definition}[theorem]{Definition}
\newtheorem{proposition}[theorem]{Proposition}
\newtheorem{lemma}[theorem]{Lemma}
\newtheorem{cor}[theorem]{Corollary}
\theoremstyle{remark}
\newtheorem{remark}{Remark}
\newtheorem{example}{Example}
\newcommand{\cmark}{$\checkmark$}
\newcommand{\Dash}{\textemdash}
\begin{document}

\title[Structural and rigidity  properties of Lie skew braces]{Structural and rigidity  properties of Lie skew braces}

\author{Marco Damele}
\address{(Marco Damele) Dipartimento di Matematica \\
         Universit\`a di Cagliari (Italy)}
        \email{m.damele4@studenti.unica.it}

\author{Andrea Loi}
\address{(Andrea Loi) Dipartimento di Matematica \\
         Universit\`a di Cagliari (Italy)}
         \email{loi@unica.it}

\thanks{
The authors are supported by INdAM and  GNSAGA - Gruppo Nazionale per le Strutture Algebriche, Geometriche e le loro Applicazioni, by GOACT - Funded by Fondazione di Sardegna and 
partially funded by PNRR e.INS Ecosystem of Innovation for Next Generation Sardinia (CUP F53C22000430001, codice MUR ECS00000038) and by the Italian Ministry of Education, University and Research through the PRIN 2022 project "Developing Kleene Logics and their Applications" (DeKLA), project code: 2022SM4XC8.}

\subjclass[2000]{16T25, 17B05, 17B30, 22E60, 22E46.} 
\keywords{Lie skew brace, skew left brace, post-Lie group, post-Lie algebra, Yang-Baxter equation, lambda-action, 
Levi-Malcev decomposition, solvable Lie group, nilpotent Lie group, semisimple Lie group}

\begin{abstract}
We investigate structural and rigidity properties of \emph{Lie skew braces} (LSBs), objects essentially known in the literature as \emph{post--Lie groups}, obtained by endowing a manifold with two compatible group laws that share the same identity element.  LSBs extend skew left braces, which are central to the study of non-involutive set-theoretic solutions of the Yang--Baxter equation, to the smooth category.
Our first main result (Theorem \ref{mainteor}) shows that, for every connected  LSB $(G,\cdot,\circ)$,  linearity (in the simply-connected case) and solvability carry over from $(G,\cdot)$ to $(G,\circ)$, whereas the converse direction is rigid: if $(G,\circ)$ is nilpotent (respectively, semisimple) then $(G,\cdot)$ is forced to be solvable (respectively, isomorphic to $(G,\circ)$). 
Theorem \ref{mainteor2} provides two ``flexibility'' statements: 
every non-linear simply connected Lie group \((G, \cdot )\) admits an LSB \((G,\cdot,\circ)\)  such that  \((G,\circ)\) is linear, 
and every simply connected solvable Lie group \((G, \circ )\) supports {an LSB} \((G,\cdot,\circ)\) such that \((G,\cdot)\) is nilpotent. A third result (Theorem \ref{mainteor3}) provides a complete existence table for non-trivial LSBs across the six standard Lie-group classes, abelian, nilpotent (non-abelian), solvable (non-nilpotent), simple, semisimple (non-simple) and mixed type, identifying precisely when {an LSB} can be built and when only the trivial or no structure occurs.
Both the explicit constructions and the properties established in our theorems rely on a factorisation technique for Lie groups, on the correspondence between LSBs and regular subgroups of the affine group $\operatorname{Aff}(G,\cdot)$, which renders LSB theory equivalent to simply transitive affine actions, and on the theory of post–Lie algebras together with their integrability properties.

\end{abstract}

\maketitle

\tableofcontents  
\section{Introduction}
A \textit{Lie skew brace} (LSB) is a triple \((G, \cdot, \circ)\), where \((G, \cdot)\) and \((G, \circ)\) are real Lie groups sharing the same smooth manifold structure, such that for all \(a, b, c \in G\), the following compatibility condition holds:
\[
a \circ (b\cdot c) = (a \circ b)\cdot a^{-1}\cdot (a \circ c),
\]
where \(a^{-1}\) denotes the inverse of \(a\) in the group \((G, \cdot)\). 
LSBs are essentially just Post-Lie groups introduced in  \cite[Sections 3.3 and 4 ]{Bai2023}. 
More precisely, the categories of LSBs and Post-Lie groups are equivalent (see Subsection \ref{postandLSB} below). 
{an LSB} can be seen as a generalization of the \textit{skew left braces} introduced by Guarnieri and Vendramin in \cite{Guarnieri2017} (see also \cite{Smoktunowicz2018}), originally developed to study non-involutive solutions of the Yang--Baxter equation 
 \cite{Yang2019}),
 and have since become a highly active area of research.
 Indeed, skew left braces correspond to LSB when \(\dim(G) = 0\) and $G$ is equipped with the discrete topology.
\vskip 0.3cm

In this paper, we investigate the structural and rigidity properties of LSBs, with particular emphasis on linearity, solvability, and semisimplicity.

Recall that a Lie group \( G \) is said to be \emph{linear} if there exists a faithful linear representation
\[
\rho : G \rightarrow \mathrm{GL}(V),
\]
where \( V \) is a finite-dimensional real vector space.

The notions of solvability and semisimplicity are of fundamental importance in the study of Lie groups, particularly because of the classical \emph{Levi--Malcev decomposition}, which asserts that any  connected and simply-connected  Lie group \( (G, \cdot) \)  can be expressed as a semidirect product
$
G = R \rtimes S$,
where \( R \) is a maximal  solvable Lie group, called the \emph{radical} of \( G \), and \( S \) is a semisimple Lie subgroup, referred to as a \emph{Levi subgroup}.
This structural decomposition highlights solvability and semisimplicity as the key features that one naturally seeks to understand when analyzing the global structure of Lie groups. 


Our goal is to understand how these structural properties behave under the transition from the Lie group \( (G, \cdot) \) to the Lie group \( (G, \circ) \), and viceversa.

These phenomena are expressed in the following theorem representing our first  main result:

\begin{theorem}\label{mainteor}
Let \( (G, \cdot, \circ) \) be a connected LSB. Then the following hold:

\begin{itemize}
    \item[\textbf{(S1)}] If \( (G, \cdot) \) is linear and simply connected, then \( (G, \circ) \) is linear.
    \item[\textbf{(S2)}] If \( (G, \cdot) \) is solvable, then \( (G, \circ) \) is solvable.
    \item[\textbf{(R1)}] If \( (G, \circ) \) is nilpotent, then \( (G, \cdot) \) is solvable.
   \item[\textbf{(R2)}] If \((G,\circ)\) is semisimple, then \((G,\cdot)\) and \((G,\circ)\) are locally isomorphic.
\end{itemize}
\end{theorem}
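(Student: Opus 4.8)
The plan is to transport everything to the infinitesimal level, where an LSB becomes a post--Lie algebra. Writing $\lambda_a(b) = a^{-1}\cdot(a\circ b)$, the compatibility condition makes each $\lambda_a$ an automorphism of $(G,\cdot)$ and $\lambda\colon (G,\circ)\to\operatorname{Aut}(G,\cdot)$ a homomorphism, so that $\Phi(a)=(a,\lambda_a)$ embeds $(G,\circ)$ as a regular (simply transitive) subgroup of $\operatorname{Aff}(G,\cdot)=(G,\cdot)\rtimes\operatorname{Aut}(G,\cdot)$. Differentiating $\lambda$ produces a homomorphism $\mathcal L\colon\mathfrak g_\circ\to\operatorname{Der}(\mathfrak g_\cdot)$ and the identity $\{x,y\}=[x,y]+\mathcal L_x y-\mathcal L_y x$ relating the two brackets on $V=T_eG$; equivalently $\iota(x)=(x,\mathcal L_x)$ is a Lie-algebra embedding $\mathfrak g_\circ\hookrightarrow\mathfrak g_\cdot\rtimes\operatorname{Der}(\mathfrak g_\cdot)$ whose image is a vector-space complement to $\operatorname{Der}(\mathfrak g_\cdot)$, the completeness/integrability reflecting simple transitivity. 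Since for connected Lie groups solvability, nilpotency and semisimplicity are read off the Lie algebra, each assertion becomes a statement about this post--Lie datum, which I then integrate back.

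For (S1) I would argue at the group level. As $(G,\cdot)$ is simply connected, $\operatorname{Aut}(G,\cdot)\cong\operatorname{Aut}(\mathfrak g_\cdot)$ is a real algebraic, hence linear, group; together with linearity of $(G,\cdot)$ this makes the semidirect product $\operatorname{Aff}(G,\cdot)$ linear. Since the orbit map of the regular subgroup is a diffeomorphism, $\Phi(G,\circ)$ is closed in $\operatorname{Aff}(G,\cdot)$, and a closed subgroup of a linear group is linear, whence $(G,\circ)$ is linear. The one point requiring care is the linearity of the semidirect product, which I would obtain from a common faithful representation built from a faithful representation of $(G,\cdot)$ and the tautological one of $\operatorname{Aut}(\mathfrak g_\cdot)$.

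The solvability statements (S2) and (R1) I would treat through $\iota$ and the exact sequence $0\to\ker\mathcal L\to\mathfrak g_\circ\to\mathcal L(\mathfrak g_\circ)\to0$. On $\ker\mathcal L$ the two brackets coincide, so it is a subalgebra of $\mathfrak g_\cdot$ and inherits solvability; the content is therefore to control $\mathcal L(\mathfrak g_\circ)\subseteq\operatorname{Der}(\mathfrak g_\cdot)$. For (S2), since any Levi factor of $\mathfrak g_\circ$ injects into a Levi factor of $\operatorname{Der}(\mathfrak g_\cdot)$, the task is to exclude a nonzero semisimple part; here completeness is essential, as it forbids a semisimple group from acting simply transitively by affine transformations on a solvable one (the post--Lie analogue of the classical nonexistence of left-symmetric structures on semisimple Lie algebras). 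For (R1), nilpotency of $\mathfrak g_\circ$ together with completeness forces the linear parts $\mathcal L_x$ to be nilpotent operators, so by Engel's theorem they share an invariant flag; writing $\operatorname{ad}^\cdot_x=\operatorname{ad}^\circ_x-\mathcal L_x+R_x$ with $R_x\colon y\mapsto\mathcal L_y x$ and reading this off a common triangularisation then exhibits $\mathfrak g_\cdot$ as solvable. I expect the delicate point in both cases to be the quantitative use of completeness ruling out the offending semisimple, respectively non-unipotent, contributions; this is where I would invoke the integrability theory of post--Lie structures and the known results on simply transitive affine and NIL-affine actions.

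Finally, (R2) is a rigidity statement whose conclusion is an isomorphism rather than triviality of $\mathcal L$: already for $\mathfrak g_\cdot$ simple the choice $\mathcal L_x=-\operatorname{ad}_x$ gives a nontrivial post--Lie structure with $\mathfrak g_\circ\cong\mathfrak g_\cdot$ (the opposite bracket). Assuming $\mathfrak g_\circ$ semisimple, $\mathcal L$ is a representation of a semisimple Lie algebra, so $V=\mathfrak g_\cdot$ is a completely reducible $\mathfrak g_\circ$-module; combining Weyl complete reducibility with the derivation identity $\mathcal L_x[y,z]=[\mathcal L_x y,z]+[y,\mathcal L_x z]$ and Whitehead's vanishing lemmas, I would show that the radical of $\mathfrak g_\cdot$ is $\mathcal L$-invariant and in fact trivial, so that $\mathfrak g_\cdot$ is semisimple, and then match it with $\mathfrak g_\circ$ through the post--Lie identities. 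This is essentially the Burde--Dekimpe rigidity for post--Lie structures with semisimple sub-adjacent algebra, which I regard as the main obstacle; once $\mathfrak g_\cdot\cong\mathfrak g_\circ$ is established, the isomorphism integrates to a Lie-group isomorphism because $(G,\cdot)$ and $(G,\circ)$ share the same underlying manifold, hence the same fundamental group, yielding $(G,\cdot)\cong(G,\circ)$.
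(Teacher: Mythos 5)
The fatal gap is in \textbf{(S2)}: the statement you are trying to prove at the post--Lie algebra level is \emph{false} at that level, so no argument of the kind you outline can succeed. The paper's Example~\ref{exnonint} exhibits a genuine post--Lie (indeed pre--Lie) algebra in which $[\cdot,\cdot]_{\cdot}=0$ is abelian (in particular solvable) while $(\mathfrak g,[\cdot,\cdot]_{\circ})\cong\mathfrak{gl}_n(\mathbb{R})$ is not solvable; hence excluding a nonzero Levi factor of $\mathfrak g_\circ$ by juggling $\ker\mathcal L$, $\operatorname{Der}(\mathfrak g_\cdot)$ and Levi subalgebras is impossible, and ``completeness'' must carry the entire weight of the proof. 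But your only quantitative use of completeness is the asserted fact that a semisimple group cannot act simply transitively by affine transformations on a solvable group, and this would not suffice even if granted: in the problematic case $\mathfrak g_\circ$ is of mixed type (as in Example~\ref{exnonint}, reductive but not semisimple), so its Levi factor $S$ is a proper subgroup and does not itself act simply transitively on anything, and the fact never applies. What you actually need is that no group with nontrivial Levi factor acts simply transitively on a solvable group --- which is \textbf{(S2)} itself, so the argument is circular. The paper's proof is genuinely global: it lifts the LSB to the universal cover (Lemma~\ref{Universalcover}), notes that a connected, simply connected solvable $(G,\cdot)$ is diffeomorphic to $\mathbb{R}^n$, so that $(G,\circ)$ is contractible and its Levi factor must be $\widetilde{SL_2(\mathbb{R})}^{\,k}$ (the only contractible simple groups), and then forces $k=0$ by combining \textbf{(S1)} with the non-linearity of $\widetilde{SL_2(\mathbb{R})}$.

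Your \textbf{(S1)} has a gap exactly at the point you flag: a semidirect product of linear Lie groups need not be linear, so linearity of $(G,\cdot)$ and of $\operatorname{Aut}(\mathfrak g_\cdot)$ does not formally give linearity of $\operatorname{Aff}(G,\cdot)$. The paper's own Remark~\ref{rmk1mainteor} provides the counterexample: the Heisenberg group modulo a central lattice is $(\mathbb{R}\times\mathbb{S}^1)\rtimes\mathbb{R}$, a semidirect product of two linear groups that is not linear. Concretely, the sum of a faithful representation $\rho$ of $(G,\cdot)$ with the tautological representation of $\operatorname{Aut}(\mathfrak g_\cdot)$ is not even a homomorphism on $G\rtimes\operatorname{Aut}(G,\cdot)$ unless $\rho$ is compatible with the automorphism action, i.e.\ $\rho(\phi(h))=A(\phi)\,\rho(h)\,A(\phi)^{-1}$ for some homomorphism $A$; an arbitrary faithful $\rho$ has no reason to satisfy this. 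Producing such a $\rho$ is the content of the paper's Lemma~\ref{lemmalinear}, which uses simple connectedness through the Levi--Malcev decomposition (the radical is a nucleus) and Wu's theorems on the holomorph \cite{Wu1986}. As for the rigidity statements: your \textbf{(R2)} coincides with the paper's proof (reduce to the post--Lie algebra, quote the rigidity theorem \cite[Th.~3.3]{Burde2022rigiditypostliealgebra}, and integrate back using that the two groups share the same underlying manifold), but your \textbf{(R1)} sketch does not close. Nilpotency of the operators $\mathcal L_x$ is false without completeness (take $x\triangleright y=xy$ on $\mathbb{R}$, where $\mathcal L_x=x\,\mathrm{id}$), and even granted it, the term $R_x\colon y\mapsto\mathcal L_y x$ in your formula for $\operatorname{ad}^{\cdot}_x$ does not belong to any Lie algebra you have triangularised, so a common flag for the $\mathcal L_x$ and $\operatorname{ad}^{\circ}_x$ yields nothing about $\operatorname{ad}^{\cdot}_x$. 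The statement you need is purely algebraic and citable, namely \cite[Prop.~3.2]{Burde2013postliealgebra}, which is exactly what the paper invokes --- no completeness required.
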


Results \textbf{(S1)} and \textbf{(S2)} show that linearity (in the simply connected case) and solvability are preserved when passing from the Lie group \( (G, \cdot) \) to \( (G, \circ) \), whereas \textbf{(R1)} and \textbf{(R2)} highlight rigidity phenomena that constrain the possible structure of \( (G, \cdot) \), given structural assumptions on \( (G, \circ) \).

It is important to observe that the assumption of simple connectedness is essential for \textbf{(S1)} in Theorem~\ref{mainteor} to hold (see Remark~\ref{rmk1mainteor} below). Similarly, without the assumption of connectedness, \textbf{(S2)} fails. Indeed, there exist LSB structures \( (G, \cdot, \circ) \) with \( G \) disconnected such that \( (G, \cdot) \) is solvable, but \( (G, \circ) \) is not (see, for example, \cite[Ex.~3.2]{Nasybullov2019}). In these examples, the group \( G \) has countably many connected components.
In the case where the LSB \( (G, \cdot, \circ) \) has \( G \) finite, i.e., \( \dim G = 0 \) and \( G \) consists of finitely many connected components,  and hence we are in the realm  of left skew braces, the question of whether  the solvability of \( (G, \cdot) \) implies the solvability of \( (G, \circ) \) remains open. This conjecture was first posed by Leandro Vendramin and Agata Smoktunowicz in \cite{Smoktunowicz2018}. Partial progress on this problem has been made; see, for instance, \cite{Nasybullov2019},  \cite{Byott2024}, and \cite{Tsang2019}.

In \cite{Smoktunowicz2018} the authors show that if $(G,\cdot,\circ)$ is a \emph{finite} left skew brace whose {additive} group $(G,\cdot)$ is nilpotent, then the  {multiplicative} group $(G,\circ)$ is necessarily solvable.
Combining this fact with property \textbf{(S2)} of Theorem \ref{mainteor} yields the following compact-group analogue
(see Section \ref{proofs} for a proof and Remark \ref{rmkconj}).
\begin{cor}\label{maincor}
Let $(G,\cdot,\circ)$ be a compact (not necessarily connected) LSB.
If  $(G,\cdot)$ is nilpotent, then  $(G,\circ)$ is solvable. 
\end{cor}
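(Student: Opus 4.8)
The plan is to split the compact group $G$ into a connected piece, governed by property \textbf{(S2)}, and a finite piece, governed by the Smoktunowicz--Vendramin theorem. Write $G_0$ for the connected component of the identity $e$ in the common underlying manifold $G$. As the topological identity component, $G_0$ is a closed normal subgroup of both $(G,\cdot)$ and $(G,\circ)$, and compactness of $G$ forces $G/G_0$ to be finite. The restriction $(G_0,\cdot,\circ)$ is then a connected sub-LSB, and I would first check that $G_0$ is in fact an \emph{ideal} of the LSB: it is normal in each of the two groups, and it is preserved by every $\lambda$-map $\lambda_a(b)=a^{-1}\cdot(a\circ b)$, since each $\lambda_a$ is a continuous automorphism of $(G,\cdot)$ and continuous automorphisms map the identity component onto itself. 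Consequently $(G/G_0,\cdot,\circ)$ inherits a well-defined finite skew left brace structure.

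For the connected part, I would argue as follows. Since $(G,\cdot)$ is nilpotent, so is its closed subgroup $(G_0,\cdot)$; being in addition compact and connected, $(G_0,\cdot)$ is abelian (indeed a torus), hence solvable. Applying \textbf{(S2)} of Theorem~\ref{mainteor} to the connected LSB $(G_0,\cdot,\circ)$ then yields that $(G_0,\circ)$ is solvable.

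For the finite part, note that $(G/G_0,\cdot)$, being a quotient of the nilpotent group $(G,\cdot)$, is nilpotent. Because $(G/G_0,\cdot,\circ)$ is a finite skew left brace, the theorem of Smoktunowicz and Vendramin \cite{Smoktunowicz2018} applies and shows that $(G/G_0,\circ)$ is solvable. Finally I would assemble the two pieces through the short exact sequence of groups
\[
1 \longrightarrow (G_0,\circ) \longrightarrow (G,\circ) \longrightarrow (G/G_0,\circ) \longrightarrow 1,
\]
in which both the kernel and the quotient are solvable; as an extension of a solvable group by a solvable group is again solvable, we conclude that $(G,\circ)$ is solvable.

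I expect the main obstacle to be not the arithmetic of solvability but the structural verification that $G_0$ is an ideal of the LSB, since this is exactly what legitimises forming the finite skew left brace $(G/G_0,\cdot,\circ)$ and hence invoking the Smoktunowicz--Vendramin result; this step rests on the fact that the $\lambda$-maps are automorphisms of $(G,\cdot)$ fixing the identity component $G_0$ setwise. The remaining ingredients---that a compact connected nilpotent Lie group is abelian, and that \textbf{(S2)} applies to the connected sub-LSB $(G_0,\cdot,\circ)$---are routine once this reduction is in place.
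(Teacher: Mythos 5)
Your proposal is correct and follows essentially the same route as the paper's own proof: split off the identity component $G_0$ (noting it is a $\lambda$-invariant ideal, so the quotient is a finite skew left brace), apply \textbf{(S2)} to the connected LSB $(G_0,\cdot,\circ)$, apply the Smoktunowicz--Vendramin result to the finite nilpotent quotient, and conclude by closure of solvability under extensions. The only cosmetic difference is that you observe $(G_0,\cdot)$ is in fact a torus, whereas the paper only needs that nilpotent implies solvable.
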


 In contrast to Theorem~\ref{mainteor}, which highlights structural and rigidity phenomena, our second main result demonstrates a form of \emph{flexibility} inherent to LSBs:

\enlargethispage{-\baselineskip} 
\begin{theorem}\label{mainteor2}
\leavevmode

\begin{itemize}
    \item[\textbf{(F1)}] For any non-linear connected and simply-connected Lie group \( (G, \cdot) \), there exists {an LSB} \( (G, \cdot, \circ) \) such that \( (G, \circ) \) is linear.

    \item[\textbf{(F2)}] For any connected and simply-connected solvable Lie group \( (G, \circ) \), there exists {an LSB} \( (G, \cdot, \circ) \) such that \( (G, \cdot) \) is nilpotent.
\end{itemize}
\end{theorem}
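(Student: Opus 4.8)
The plan is to reduce both statements to the construction of a suitable post--Lie \emph{algebra} structure on the underlying vector space $V$ of the relevant Lie algebra, and then to integrate it. Recall that a post--Lie algebra consists of a Lie bracket $[\cdot,\cdot]$ on $V$ together with a product $\triangleright$ satisfying the two post--Lie axioms, and that it carries a second (\emph{derived}) bracket $\{x,y\} = x\triangleright y - y\triangleright x + [x,y]$. Since $G$ is simply connected, the integrability of post--Lie algebras guarantees that any such structure integrates to an LSB $(G,\cdot,\circ)$ living on a single manifold, with $\mathrm{Lie}(G,\cdot) = [\cdot,\cdot]$ and $\mathrm{Lie}(G,\circ) = \{\cdot,\cdot\}$. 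Hence for \textbf{(F1)} I would fix $[\cdot,\cdot] = \mathrm{Lie}(G,\cdot)$ and look for a product $\triangleright$ whose derived bracket integrates to a linear group, whereas for \textbf{(F2)} I would prescribe the \emph{derived} bracket to be the given solvable $\mathrm{Lie}(G,\circ)$ and arrange the base bracket to be nilpotent.

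For \textbf{(F2)} I would realise the given solvable algebra $\mathrm{Lie}(G,\circ)$, with bracket $[\cdot,\cdot]_\circ$, as the derived bracket of a post--Lie structure whose base is the \emph{nilshadow}. Writing $\operatorname{ad}_x = S_x + N_x$ for the (real, abstract) Jordan decomposition of the adjoint operators into commuting semisimple and nilpotent derivations, I set $x \triangleright y := S_x(y)$ and define the base bracket by $[x,y] := [x,y]_\circ - S_x y + S_y x$. Then $[\cdot,\cdot]$ is the nilshadow of $\mathrm{Lie}(G,\circ)$, hence nilpotent, and one recovers $\{x,y\} = S_xy - S_yx + [x,y] = [x,y]_\circ$, so the derived bracket is the given one. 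It remains to verify the two post--Lie axioms \Dash\ that each $S_x$ is a derivation of the base bracket and that $x\mapsto S_x$ interacts with it as axiom~(2) demands \Dash\ which are standard features of the nilshadow; integrating over the simply connected manifold then yields an LSB with $(G,\cdot)$ nilpotent. The only delicate point is that $S_x,N_x$ be genuine real derivations, which for a general (not completely solvable) algebra is secured by using the real Jordan decomposition inside the algebraic hull.

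For \textbf{(F1)} the target is linearity of $(G,\circ)$, a global property that is more subtle than nilpotency; here I would argue along the Levi--Malcev decomposition $\mathfrak g = \mathfrak r \rtimes \mathfrak s$ of $\mathrm{Lie}(G,\cdot)$. On the radical $\mathfrak r$ I would again apply the nilshadow, replacing it by a nilpotent (hence, in the simply connected case, linear) algebra. On the Levi factor $\mathfrak s = \mathfrak s_{\mathrm{cpt}} \oplus \mathfrak s_{\mathrm{ncpt}}$ I would leave the compact and finite--centre factors untouched, since their simply connected groups are already linear, and on each non--compact factor whose simply connected group has infinite centre \Dash\ the genuine source of non--linearity, such as $\widetilde{SL_2(\mathbb R)}$ \Dash\ I would install a product whose derived bracket is \emph{solvable}. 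For $\mathfrak s$ semisimple every post--Lie product has the form $x \triangleright y = [\phi(x),y]$ with $\phi$ linear (all derivations being inner), so the derived bracket is $\{x,y\} = [Ax,y]+[x,Ay]$ with $A = \phi + \tfrac12\mathrm{id}$; choosing $A$ adapted to an Iwasawa/triangular decomposition should yield a solvable derived algebra of the same dimension, whose simply connected group is linear. Gluing these pieces compatibly with the action of $\mathfrak s$ on $\mathfrak r$ produces a global post--Lie product, which integrates to the desired LSB with $(G,\circ)$ linear.

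I expect the main obstacle to lie in \textbf{(F1)}, in the infinite--centre semisimple factors and the gluing. Two things must be controlled at once: that each chosen $A$ satisfies the second post--Lie axiom (a quadratic, Yang--Baxter--type constraint) while producing a linear derived group of the correct dimension, and that the factor--wise choices assemble into a single product respecting the semidirect action $\mathfrak s \curvearrowright \mathfrak r$. The true crux is establishing linearity of the resulting $(G,\circ)$ globally rather than factor by factor, since linearity of simply connected groups is sensitive to the centre and need not pass through semidirect products; verifying that the construction does not reintroduce an infinite, non--injectable central subgroup is where the real work sits. By contrast \textbf{(F2)} is comparatively routine once the nilshadow is recognised as a post--Lie product.
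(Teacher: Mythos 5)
Your reduction to post--Lie algebras founders on the integration step, and this is not a technicality that simple connectedness repairs: the principle you invoke --- ``the integrability of post--Lie algebras guarantees that any such structure integrates to an LSB'' when $G$ is simply connected --- is false, and the paper's own Example~\ref{exnonint} is a counterexample to it. There the pre--Lie algebra on $\mathbf{M}_n(\mathbb{R})$ has $[\cdot,\cdot]_{\cdot}=0$, whose simply connected integrating group is $\mathbb{R}^{n^2}$, yet the structure integrates to no LSB. One can see the obstruction concretely already for $n=1$: the PLA corresponds to the diagonal subalgebra $\{(\xi,\xi)\mid\xi\in\mathbb{R}\}$ of $\operatorname{aff}(\mathbb{R})$, which integrates to the subgroup $H=\{(e^{t}-1,e^{t})\mid t\in\mathbb{R}\}\le\operatorname{Aff}(\mathbb{R},+)$; the orbit of $0$ under $H$ is the half--line $(-1,\infty)$ and the point $-1$ is fixed by all of $H$, so the affine action is neither transitive nor free. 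In general a PLA only produces a Lie subgroup of $\operatorname{Aff}(G,\cdot)$ of the right dimension, whereas an LSB requires that subgroup to be \emph{regular} (Proposition~\ref{proplambda}), i.e.\ the affine action to be simply transitive; this completeness is a global condition with no $\pi_1$ criterion, and it is exactly the open problem the paper repeatedly flags. This is why the paper works at the group level throughout: \textbf{(F2)} is obtained by combining Theorem~\ref{propLSBaff} with Dekimpe's Theorem~\ref{dekimpesolvable}, whose entire content is the existence of a \emph{simply transitive} affine action on a nilpotent group (the group--level semisimple splitting you are trying to rederive infinitesimally); and \textbf{(F1)} is obtained from an explicit factorization $G=G_1G_2$, $G_1\cap G_2=\{e\}$, with $G_1=RUV$ and $G_2=AN$ coming from Levi--Malcev plus Iwasawa, so that Example~\ref{exLSBzappa} yields an LSB with $(G,\circ)\cong G_1\times G_2$ a \emph{direct} product of linear groups --- sidestepping both the integration problem and the failure of linearity to pass through semidirect products, which you correctly identify as the crux but do not resolve.

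Two further points would need repair even before the integration issue. In \textbf{(F2)}, the map $x\mapsto S_x$ (semisimple part of the Jordan decomposition of $\operatorname{ad}_x$) is not linear in $x$, so $x\triangleright y:=S_x(y)$ is not bilinear: in the Euclidean algebra $\mathfrak{e}(2)$ with $[x,y]=z$, $[x,z]=-y$, $[y,z]=0$, the operator $\operatorname{ad}_{x+y}$ has simple eigenvalues $0,\pm i$ and hence is already semisimple, so $S_{x+y}=\operatorname{ad}_{x+y}\neq\operatorname{ad}_x=S_x+S_y$. The genuine nilshadow requires a choice of maximal torus (mutually commuting semisimple parts), and after that repair you would still only hold a PLA, returning you to the gap above. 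In \textbf{(F1)}, the choice of $A$ on each infinite--centre simple factor satisfying the quadratic axiom~(3) with solvable derived bracket, the compatibility of these choices with the action of $\mathfrak{s}$ on $\mathfrak{r}$, and the global linearity of the glued $(G,\circ)$ are all left as expectations rather than proved; since those are precisely the hard parts, \textbf{(F1)} is incomplete even granting integration.
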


It is worth pointing out that to the best of the author's knowledge, the question of whether every \emph{finite solvable group} \((G, \circ)\) admits a \emph{left skew brace structure} \((G, \cdot, \circ)\), such that \((G, \cdot)\) is \emph{nilpotent}, remains open.

\vskip 0.3cm

The proofs of  $\textbf{(R1)}$ and $\textbf{(R2)}$ of Theorem \ref{mainteor} are based on associating to {an LSB} 
\( (G, \cdot, \circ) \) a post Lie algebra structure, namely a triple \((\mathfrak{g}, [\cdot,\cdot]_{\cdot}, [\cdot,\cdot]_{\circ})\),
where  \((\mathfrak{g}, [\cdot,\cdot]_{\cdot})\) (resp. \((\mathfrak{g}, [\cdot,\cdot]_{\circ})\)) is the Lie algebra associated to \( (G, \cdot) \) (resp. \( (G, \circ) \)) related by a post-Lie product (cf. Def. \ref{postLieAlgebras} below).

 Not every post-Lie algebra arises as the infinitesimal counterpart of {an LSB} structure.
Determining which PLAs can be integrated, i.e., obtained by differentiating {an LSB}, is a subtle and active area of research (see \cite{Bai2023, Burde2018postliealgebranilpotent, Burde2021crystallographicpostliealgebra, Burde2022rigiditypostliealgebra, Ebner2019postliealgebra} and references therein).
It is therefore of particular interest to construct non-trivial examples of LSBs. 


 
 
 
  
   
   
     
     








In the following theorem, our third main result,  we summarize the existence of non-trivial LSBs structures for various combinations of \((G, \cdot, \circ)\), as illustrated in the table. A check mark (\cmark) indicates that a non-trivial LSB structure of the corresponding type exists, while a  dash ($-$) that no such example is possible  and  a $\cong$ that the LSB is trivial, i.e.  \((G, \cdot)\cong (G, \circ)\). We write \(\cong_{\mathrm{loc}}\) to denote local isomorphism of Lie groups, i.e. isomorphism of the corresponding Lie algebras.
Notice also, that to avoid overlaps between classes, we adopt the following conventions:
\begin{itemize}
  \item \textbf{ab} refers to an abelian Lie group;
  \item \textbf{nil} denotes a non-abelian nilpotent Lie group;
  \item \textbf{solv} denotes a solvable but non-nilpotent Lie group;
  \item \textbf{simp} denotes a simple Lie group 
\item \textbf{ssimp} denotes a semisimple but non-simple Lie group;
\item \textbf{mixed-type} refers to a Lie group that does not fall into any of the categories above.
\end{itemize}

\begin{theorem}\label{mainteor3}
The existence of non-trivial LSB  $(G, \cdot, \circ)$ for different types of Lie groups \((G, \cdot )\) and \((G, \circ)\)  is summarized in the following table:

\begin{table}[h]\label{tableLSB}
  \centering
  \small 
  \renewcommand{\arraystretch}{1.1}
  \setlength{\tabcolsep}{6pt}

  \begin{tabular}{|l*{6}{c}|} 
    \hline
    & $(G,\cdot)$ \textbf{ab} 
    & $(G,\cdot)$ \textbf{nil} 
    & $(G,\cdot)$ \textbf{solv} 
    & $(G,\cdot)$ \textbf{simp} 
    & $(G,\cdot)$ \textbf{ssimp} 
    & $(G,\cdot)$ \textbf{mixed-type} \\
    \hline
    $(G,\circ)$ \textbf{ab}            & $\cong$ & \cmark & \cmark & \Dash & \Dash & \Dash \\
    $(G,\circ)$ \textbf{nil}           & \cmark & \cmark & \cmark & \Dash & \Dash & \Dash \\
    $(G,\circ)$ \textbf{solv}          & \cmark & \cmark & \cmark & \cmark & \cmark & \cmark \\
    $(G,\circ)$ \textbf{simp}          & \Dash  & \Dash & \Dash & $\cong$ & {\Dash} & \Dash \\
    $(G,\circ)$ \textbf{ssimp}         & \Dash & \Dash & \Dash & \Dash & {$\cong_{loc}$} & \Dash \\
    $(G,\circ)$ \textbf{mixed-type}    & \Dash & \Dash & \Dash & \cmark & \cmark & \cmark \\
    \hline
  \end{tabular}
\end{table}
\end{theorem}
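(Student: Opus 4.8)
The plan is to split the $36$ cells of the table into three regimes handled by different tools: the impossibility entries (\Dash) and the forced-isomorphism entries ($\cong$) follow from the rigidity half of Theorem~\ref{mainteor}, whereas the existence entries (\cmark) demand explicit constructions, obtained through the correspondence between LSBs and regular (simply transitive) subgroups of $\operatorname{Aff}(G,\cdot)$, the post-Lie machinery of Definition~\ref{postLieAlgebras}, and the flexibility statements of Theorem~\ref{mainteor2}.

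First I would clear the \Dash and $\cong$ entries by contraposition. Among the six classes exactly \textbf{ab}, \textbf{nil}, \textbf{solv} are solvable, so reading \textbf{(S2)} contrapositively (if $(G,\circ)$ is not solvable then $(G,\cdot)$ is not solvable) kills every cell whose row lies in $\{\textbf{simp},\textbf{ssimp},\textbf{mixed-type}\}$ and whose column lies in $\{\textbf{ab},\textbf{nil},\textbf{solv}\}$: this is the lower-left $3\times 3$ block. Dually, \textbf{(R1)} (if $(G,\cdot)$ is not solvable then $(G,\circ)$ is not nilpotent) kills the cells whose row is \textbf{ab} or \textbf{nil} and whose column lies in $\{\textbf{simp},\textbf{ssimp},\textbf{mixed-type}\}$, producing the upper-right block of dashes. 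Finally \textbf{(R2)} forces $(G,\cdot)\cong(G,\circ)$ whenever $(G,\circ)$ is semisimple: in the \textbf{simp} row this confines $(G,\cdot)$ to the \textbf{simp} column and, after adding the rigidity that a simple Lie group carries only the trivial LSB, yields the diagonal $\cong$; the \textbf{ssimp} row is treated the same way, now using the rigidity of each simple factor. The corner $(\textbf{ab},\textbf{ab})=\cong$ is immediate, since two abelian group structures on the same connected manifold are isomorphic Lie groups.

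Next I would produce the fourteen \cmark cells in two waves. The eight existence cells inside the solvable $3\times 3$ block live entirely in the solvable world and can be built by hand: the Heisenberg product on $\mathbb{R}^3$ together with the additive law is an LSB realising the pair $(\textbf{nil},\textbf{ab})$, and, after exchanging the roles of the two laws, $(\textbf{ab},\textbf{nil})$; the simply-connected ``$ax+b$'' group on $\mathbb{R}^2$ paired with the additive law realises the \textbf{solv}/\textbf{ab} cells; for the cells with $(G,\cdot)$ nilpotent I would invoke \textbf{(F2)}, which attaches to any simply-connected solvable $(G,\circ)$ an LSB with $(G,\cdot)$ nilpotent; and I would adjust the precise sub-class by taking direct products (with $\mathbb{R}^k$ or with a Heisenberg factor), making sure that $(G,\cdot)\not\cong(G,\circ)$ so that the example is genuinely non-trivial. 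The remaining six \cmark cells---the \textbf{solv} and \textbf{mixed-type} rows under the \textbf{simp}, \textbf{ssimp}, \textbf{mixed-type} columns---are the hard ones: here I would start from a Rota--Baxter operator, or more generally a factorisation $\mathfrak g=\mathfrak a\oplus\mathfrak b$, of the (semi)simple algebra $\mathfrak g=\operatorname{Lie}(G,\cdot)$, producing a second bracket $[\cdot,\cdot]_\circ$ whose isomorphism type can be arranged to be non-nilpotent solvable or genuinely mixed, and then integrate the resulting post-Lie algebra on the simply-connected cover (where (semi)simple groups are diffeomorphic to Euclidean space) to an honest LSB, equivalently realise $(G,\circ)$ as a solvable, resp. mixed, regular subgroup of $\operatorname{Aff}(G,\cdot)$. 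Running this for $G$ simple, for $G$ a product of two simple factors, and for a Levi--Malcev mixed $G$ fills all six cells.

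The hard part will be this last construction: one must guarantee simultaneously that the bracket $[\cdot,\cdot]_\circ$ coming from the factorisation is globally integrable (not merely an infinitesimal post-Lie structure) and that its isomorphism type is the prescribed one---non-nilpotent solvable for the \textbf{solv} row, genuinely mixed for the \textbf{mixed-type} row---rather than collapsing to a (semi)simple algebra, which \textbf{(R2)} would immediately forbid. Controlling the radical and the Levi factor of $[\cdot,\cdot]_\circ$ while checking global integrability is where the real work concentrates. A secondary delicate point is pinning down the exact symbol in the $(\textbf{ssimp},\textbf{ssimp})$ cell, which needs the factor-by-factor rigidity of the simple constituents on top of \textbf{(R2)}.
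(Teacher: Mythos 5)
Your handling of the dashes and $\cong$ entries coincides with the paper's (contrapositives of \textbf{(S2)} and \textbf{(R1)} for the two blocks of dashes, \textbf{(R2)} plus uniqueness of the connected abelian structure for the trivial cells), and your solvable $3\times3$ block is essentially workable. The genuine gap lies in the six hard cells, $(G,\cdot)$ simple/semisimple/mixed with $(G,\circ)$ solvable or mixed. Your plan there --- extract a second bracket from a Rota--Baxter operator or a factorisation $\mathfrak g=\mathfrak a\oplus\mathfrak b$ of $\operatorname{Lie}(G,\cdot)$, then integrate the resulting post-Lie algebra --- stalls exactly at the step you yourself flag as unresolved: integrability of post-Lie algebras is the hard open problem the paper emphasizes (cf.\ Example~\ref{exnonint}), and you give no criterion that your factorisation-induced PLAs would satisfy; moreover your supporting claim that simply connected semisimple groups are diffeomorphic to Euclidean space is false ($\mathrm{SU}(2)\cong S^3$, $\mathrm{SL}_2(\mathbb{C})\cong S^3\times\mathbb{R}^3$). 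The idea your proposal is missing is that the factorisation should be performed at the \emph{group} level, where no integration is needed: the global Iwasawa decomposition $G=KAN$ gives $G=K\cdot(AN)$ with $K\cap AN=\{e\}$, so Example~\ref{exLSBzappa} immediately produces an LSB with $(G,\circ)\cong K\times AN$. For $\mathrm{SL}_2(\mathbb{R})$, $K=\mathrm{SO}(2)$ is abelian, so $(G,\circ)$ is solvable; for $\mathrm{SL}_3(\mathbb{R})$, $K=\mathrm{SO}(3)$ is semisimple, so $(G,\circ)$ is of mixed type; taking products of these LSBs with themselves and with $(\mathbb{R},+,+)$, together with $\mathbb{R}^3\rtimes_{\Ad}\mathrm{SU}(2)$ versus $\mathbb{R}^3\times\mathrm{SU}(2)$, fills all six cells. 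This is the paper's route, and it bypasses the integration problem entirely.

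Two secondary points. First, ``exchanging the roles of the two laws'' of an LSB is not a legal move in general: identity \eqref{eqLSB} is not symmetric in $\cdot$ and $\circ$. It happens to work for your Heisenberg and $ax+b$ examples only because the two orderings arise from two \emph{different} constructions (Examples~\ref{exLSBzappa} and~\ref{exLSBrevzappa}) applied to a semidirect product; this should be said. Second, your ``adjust by direct products'' step can silently produce trivial LSBs: the product of $(\mathbb{R}^3,\cdot_{\mathrm{Heis}},+)$ with $(\mathbb{R}^3,+,\cdot_{\mathrm{Heis}})$ has $(G,\cdot)\cong H_3\times\mathbb{R}^3\cong(G,\circ)$, so for the cells where both groups lie in the same class (\textbf{nil}/\textbf{nil}, \textbf{solv}/\textbf{solv}) you must take \emph{unequal} numbers of swapped factors, e.g.\ $(G,\cdot)\cong H_3^2\times\mathbb{R}^3$ against $(G,\circ)\cong\mathbb{R}^6\times H_3$. (The paper instead quotes Burde--Dekimpe--Deschamps for \textbf{nil}/\textbf{nil}; note that its own \textbf{solv}/\textbf{solv} example $H\rtimes_{\Ad}H$ is actually isomorphic to $H\times H$ via $(a,b)\mapsto(a\,b,\,b)$, so a correction of the type you propose is genuinely needed there.) Finally, your hesitation about the (\textbf{ssimp},\textbf{ssimp}) symbol reflects an inconsistency internal to the paper, whose table shows a dash while its Step~2 derives triviality from \textbf{(R2)}, exactly as you do; your reading agrees with the paper's proof text.
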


To conclude, we stress how our results complement and extend, in the solvable and semisimple cases, the classification provided in \cite[Th.~5.1]{Burde2024postliealgebraperfect}, which concerns PLAs.
This comparison allows for the immediate identification of several PLAs that cannot be integrated into {an LSB}. For example, there exist PLAs 
\((\mathfrak{g}, [\cdot,\cdot]_{\cdot}, [\cdot,\cdot]_{\circ})\) 
where 
\((\mathfrak{g}, [\cdot,\cdot]_{\cdot})\) 
is a non-nilpotent solvable Lie algebra, while 
\((\mathfrak{g}, [\cdot,\cdot]_{\circ})\) 
is reductive but non-abelian (see Example~\ref{exnonint} below). According to condition \textbf{(S2)} in Theorem~\ref{mainteor}, such PLAs are not integrable into an LSB.

Another interesting implication of our results concerns the following open question posed by the authors in \cite{Burde2024postliealgebraperfect}, which relates to the existence of PLAs 
\((\mathfrak{g}, [\cdot,\cdot]_{\cdot}, [\cdot,\cdot]_{\circ})\) 
such that 
\((\mathfrak{g}, [\cdot,\cdot]_{\cdot})\) 
is nilpotent and 
\((\mathfrak{g}, [\cdot,\cdot]_{\circ})\) 
is perfect, i.e.  $\mathfrak{g}= [\mathfrak{g},\mathfrak{g}]_{\circ}$. While we are currently not able to answer this question definitively, we can nevertheless state that any such PLA, if it exists, would not be integrable into an LSB due to condition \textbf{(S2)} in Theorem~\ref{mainteor}.

A more comprehensive comparison between our findings and those of \cite{Burde2024postliealgebraperfect} will be presented in a forthcoming work.

The paper is organised as follows.  
Section \ref{secLSB} recalls the necessary background on Lie skew braces, post–Lie groups and algebras, and their realisation as regular subgroups of the affine group $\operatorname{Aff}(G,\cdot)$; in particular, it describes in detail the correspondence with simply transitive affine actions, introduces the $\lambda$-action, and sets up the {\emph{derivation functor}} that associates to each LSB its corresponding post Lie algebra.  
Section \ref{proofs} contains the proofs of  Theorem \ref{mainteor}-\ref{mainteor2} and 
Corollary \ref{maincor}: 
parts $\bf(S1)$-$\bf(S2)$ are obtained by lifting LSBs to the universal cover and exploiting the linearity of $\operatorname{Aff}_0(G,\cdot)$, while the rigidity statements $\bf(R1)$-$\bf(R2)$ 
follow from corresponding rigidity theorems for post-Lie algebras.  
The flexibility results and the existence table are derived through an explicit factorisation technique for Lie groups, the equivalence between LSBs and simply transitive affine actions (cf. Theorem \ref{propLSBaff} below), and integrability criteria for post-Lie {algebras.}

 \section{Lie skew braces, post-Lie groups, post-Lie algebras and affine actions}\label{secLSB}
\subsection{Lie skew braces}
Let \( (G, \cdot, \circ) \) be a Lie skew brace (LSB), meaning that \( (G, \cdot) \) and \( (G, \circ) \) are real Lie groups sharing the same  smooth manifold structure, and such that the identity  
\begin{equation} \label{eqLSB}
a \circ (b \cdot c) = (a \circ b) \cdot a^{-1} \cdot (a \circ c)
\end{equation}
holds for all \( a, b, c \in G \), where \(a^{-1}\) denotes the inverse of \(a\) in the group \((G, \cdot)\). 
From this it follows that the two groups also {share the same identity} element denoted here by $e$.
One can then define the category \( \mathbf{LSB} \), whose \emph{objects} are LSBs, and whose \emph{morphisms} between two LSBs  
\((G_1, \cdot_1, \circ_1)\) and \((G_2, \cdot_2, \circ_2)\) are \emph{LSB homomorphisms}, that is, smooth maps \( \varphi: G_1 \to G_2 \) such that
\[
\varphi(a \cdot_1 b) = \varphi(a) \cdot_2 \varphi(b) \quad \text{and} \quad \varphi(a \circ_1 b) = \varphi(a) \circ_2 \varphi(b)
\]
for all \( a, b \in G_1 \).

The category of  \( \mathbf{LSB} \)  forms a \emph{subcategory} of 
\emph{left skew braces}, as defined in \cite{Guarnieri2017} (in the purely algebraic sense, without requiring a Lie group structure), namely the category consisting of those objects and morphisms where the group operations are considered on underlying sets rather than smooth manifolds.
Indeed, all the results described in this subsection are natural extensions to the smooth case of those established for left skew braces in the algebraic setting, as presented in \cite{Guarnieri2017}.

\begin{example}\label{exLSBtriv} (cf. \cite[Ex. 1.3 ]{Guarnieri2017})
Let \( (G, \cdot) \) be a real Lie group. Define \( a \circ b = a\cdot b \) for all \( a, b \in G \). Then \( (G, \cdot, \circ) \) is {an LSB}. Similarly, defining \( a \circ b = b\cdot  a\) also yields {an LSB} structure on \( G \).
These LSBs are isomorphic if and only if $(G, \cdot)$ is abelian. For our aim these LSBs
are not interesting since in both cases  $(G, \cdot)\cong (G, \circ)$.
\end{example}

\begin{example} \label{exLSBprod}
Let \( (G_1, \cdot_1, \circ_1) \) and \( (G_2, \cdot_2, \circ_2) \) be two LSBs. Then the triple
\[
(G_1 \times G_2, \cdot, \circ)
\]
is also {an LSB}, where the operations are defined componentwise by
\[
(a_1, b_1) \cdot (a_2, b_2) := (a_1 \cdot_1 a_2,\; b_1 \cdot_2 b_2), \quad
(a_1, b_1) \circ (a_2, b_2) := (a_1 \circ_1 a_2,\; b_1 \circ_2 b_2),
\]
for all \( a_1, a_2 \in G_1 \) and \( b_1, b_2 \in G_2 \).
This LSB is referred to as the \emph{direct product} of the LSBs \( (G_1, \cdot_1, \circ_1) \) and \( (G_2, \cdot_2, \circ_2) \).
\end{example}

\begin{example} \label{exLSBzappa} (cf.\ \cite[Ex.~1.6]{Guarnieri2017}) 
Let \( G \) be a real Lie group admitting a factorization \( G = G_1 G_2 \), where \( G_1 \) and \( G_2 \) are Lie subgroups such that \( G_1 \cap G_2 = \{1\} \). 
 In particular, every element \( a \in G \) can be uniquely written as \( a = a_1\cdot a_2 \) for some \( a_1 \in G_1 \), \( a_2 \in G_2 \).
This factorization allows us to define a new group operation \( \circ \) on the underlying manifold of \( G \), as follows: for any \( a, b \in G \), let \( a = a_1\cdot a_2 \), with \( a_1 \in G_1 \) and  \( a_2 \in G_2 \)  . Then define
\[
a \circ b := a_1 \cdot b \cdot a_2.
\]
With this operation, the triple \( (G, \cdot, \circ) \) becomes {an LSB}. 
Indeed the map
\[
\Phi: G_1 \times G_2 \longrightarrow G, \quad (a_1,a_2) \mapsto a_1 \cdot a_2^{-1}
\]
is a Lie group isomorphism between $(G, \circ )$ and the direct product $G_1\times G_2$.
\end{example}

\vskip 0.3cm

Let \((G, \cdot) \) be a connected real Lie group. The group of Lie group automorphisms of \( G \), denoted \( \mathrm{Aut}(G, \cdot) \), consists of all Lie group isomorphisms of \( G \), i.e.  smooth group homomorphisms from \( G \) to itself with smooth inverses. When endowed with the subspace topology induced by the inclusion
\[
\mathrm{Aut}(G, \cdot) \subset C^0(G, G),
\]
where \( C^0(G, G) \) is the space of continuous maps from \( G \) to itself equipped with the compact-open topology, the group \( \mathrm{Aut}(G, \cdot) \) becomes a real Lie group of finite dimension. For further details, see \cite{Hochschild1952aut} and \cite{Dani1992}.

\begin{example} \label{exLSBrevzappa}
Let \( G_1 \) and \( G_2 \) be connected real Lie groups, and let 
\[
\alpha : G_2 \to \mathrm{Aut}(G_1)
\]
be a Lie group homomorphism. Then the set \( G_1 \times G_2 \), endowed with the usual componentwise group operation
\[
(a_1, b_1) \cdot (a_2, b_2) := (a_1 \cdot a_2,\, b_1 \cdot b_2),
\]
and with the twisted operation
\[
(a_1, b_1) \circ_{\alpha} (a_2, b_2) := \left( a_1 \cdot \alpha(b_1)(a_2),\; b_1 \cdot b_2 \right),
\]
forms {an LSB} \( (G_1 \times G_2, \cdot, \circ_{\alpha}) \).
Here, \( (G_1 \times G_2, \circ_{\alpha}) \) is precisely the {\em semidirect product} Lie group \( G_1 \rtimes_{\alpha} G_2 \). 

\end{example}

\vskip 0.3cm

\begin{definition}
The  {\em affine group}\footnote{Some authors, especially in the context of finite groups, but also occasionally in the Lie group setting (see e.g., \cite{Wu1986}),  refer to $\mathrm{Aff}(G)$
as the  \emph{holomorph} of $G$,  and denote it by \( \mathrm{Hol}(G) \) instead of \( \mathrm{Aff}(G) \).}
associated to a connected Lie group  \( (G, \cdot) \) is the Lie group given by the semidirect product
\[
\mathrm{Aff}(G, \cdot) = (G, \cdot) \rtimes \mathrm{Aut}(G, \cdot),
\]
where  \( \mathrm{Aut}(G, \cdot) \)  acts on \( G\) by evaluation. 
\end{definition}

The group operation on \( \mathrm{Aff}(G, \cdot) \) is then given by
\[
(a, f) (b, g) = (a \cdot f(b), f  g),
\]
for all \( a, b \in G \), \( f, g \in \mathrm{Aut}(G, \cdot) \).
Any {Lie} subgroup $H$ of $\mathrm{Aff}(G, \cdot)$ acts smoothly  on $G$:
$$(x, f)a=x\cdot f(a),\  a\in G, (x, f)\in H.$$

In particular $\mathrm{Aff}(G, \cdot)$ acts transitively on $G$ and the stabilizer 
of any $a\in G$ is a Lie subgroup of  $\mathrm{Aff}(G, \cdot)$ isomorphic (as Lie group)
 to {$\mathrm{Aut}(G, \cdot)$}.

\begin{definition}\label{regsubgroup}
Let $(G, \cdot)$ be a connected Lie group.
A subgroup \( H \subseteq \mathrm{Aff}(G, \cdot) \) is said to be \emph{regular}
if for each  $a\in G$ there exists a unique $(x, f)\in H$  such that $x\cdot f(a)=e$,  where $e$ denote the identity element of $(G, \cdot)$.
\end{definition}

If $H$ is a regular subgroup of  $\mathrm{Aff}(G, \cdot)$ then it is not hard to see that   the projection  \( \pi_1 : \mathrm{Aff}(G) \to G \) onto the first factor, when restricted to \( H \), is a bijection (see \cite[Lemma 4.1]{Guarnieri2017}).

\begin{definition}\label{regsubgroup}
Let $(G, \cdot)$ be a connected Lie group.
A  subgroup \( H \subseteq \mathrm{Aff}(G, \cdot) \) is said to be \emph{ Lie regular} if the following conditions are satisfied:
\begin{enumerate}
\item
$H$ is a regular subgroup of $ \mathrm{Aff}(G, \cdot)$;
\item
$H$ is  closed as a subspace of $\mathrm{Aff}(G, \cdot)$.
\end{enumerate}
\end{definition}

\begin{remark}\rm\label{rmkclosed}
By the Closed Subgroup Theorem, assumption (2) implies that \(H\) is an embedded Lie subgroup of $\mathrm{Aff}(G, \cdot)$.
In fact, one can weaken (2) to: \lq\lq\(H\) is an \emph{immersed} Lie subgroup of $\mathrm{Aff}(G, \cdot)$'.' Indeed, the action of \(H\) on \(G\),
\[
H\times G\longrightarrow G,\qquad (x,f)\cdot a:=x\,f(a),
\]
is smooth, free, and transitive. For a smooth transitive action of a Lie group \(H\) on a Hausdorff \emph{smooth manifold} \(X\), the canonical map
\[
\Phi:\; H/H_x \longrightarrow X,\qquad \Phi(hH_x)=h\cdot x,
\]
is a homeomorphism (in fact, a diffeomorphism once \(H/H_x\) is given its standard smooth structure, see e.g. \cite[Th. 21.18]{Lee-ISM}).
Applying this with \(X=G\) and \(x=e\), the stabilizer is \(H_e=\{e\}\), hence
\[
\pi_1|_H:\; H \longrightarrow G
\]
is a homeomorphism, where \(\pi_1:G\times \mathrm{Aut}(G, \cdot)\to G\) denotes the first projection under the identification \(\mathrm{Aff}(G, \cdot)\cong G\rtimes \mathrm{Aut}(G, \cdot)\).
Write \((\pi_1|_H)^{-1}(a)=(a,\varphi_a)\) and define
\[
\varphi:G\longrightarrow \mathrm{Aut}(G, \cdot),\qquad \varphi(a):=\varphi_a.
\]
Then \(\varphi=\pi_2\circ(\pi_1|_H)^{-1}\) is continuous (here \(\pi_2\) is the second projection), and
\[
H=\{\, (a,\varphi(a)) : a\in G \,\}
\]
is the graph of a continuous map into the Hausdorff group \(\mathrm{Aut}(G, \cdot)\). Therefore \(H\) is closed in
\(\mathrm{Aff}(G, \cdot)\).
\end{remark}

\vskip 0.3cm

Now, let us fix {an LSB} \( (G, \cdot, \circ) \). Define the  \emph{lambda-action}
\begin{equation}\label{lambda}
\lambda: (G, \circ) \to \mathrm{Aut}(G, \cdot), \quad a \mapsto \lambda_a,
\end{equation}
where
\begin{equation}\label{lambdaspec}
\lambda_a(b) = a^{-1}\cdot (a \circ b), \quad \forall b \in G.
\end{equation}
It is easily seen (see, e.g.,  \cite[Corollary 1.10]{Guarnieri2017}, that $\lambda$ is a group homomorphism. This action is particularly important, as it expresses the group operations \(\circ\) in terms of \(\cdot\), and viceversa:
$$a \circ b = a\cdot \lambda_a(b), \quad a\cdot b = a \circ \lambda_{\overline{a}}(b), \forall a, b \in G,$$
where \(\overline{a}\) denotes the inverse of \(a\) in the group \((G, \circ)\)

The following lemma shows that $\lambda$  is indeed a Lie group homomorphism.

\begin{lemma} \label{lambdasmooth}
Let \( (G, \cdot, \circ) \) be {an LSB}.
Then the map \( \lambda \) is smooth.
\end{lemma}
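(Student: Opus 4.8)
The plan is to reduce the smoothness of $\lambda$ to two standard ingredients: the smoothness of the ``joint'' evaluation map, and the automatic smoothness of continuous homomorphisms between Lie groups. First I would record that $\lambda$ is already an abstract group homomorphism $(G,\circ)\to\mathrm{Aut}(G,\cdot)$, as noted immediately before the statement; hence it suffices to prove that $\lambda$ is \emph{continuous}, after which the classical theorem asserting that every continuous homomorphism of Lie groups is smooth (indeed real-analytic) finishes the argument. Recall here that $\mathrm{Aut}(G,\cdot)$ carries a genuine Lie group structure by the results of Hochschild cited above, so this automatic-smoothness principle is available.

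Next I would introduce the map
\[
\Lambda: G\times G\longrightarrow G,\qquad \Lambda(a,b)=a^{-1}\cdot(a\circ b)=\lambda_a(b),
\]
which is smooth because it is built by composing the two smooth group multiplications $\cdot$ and $\circ$ with $\cdot$-inversion. The point is to transfer this joint smoothness into continuity of the adjoint map $a\mapsto\lambda_a$ with values in $C^0(G,G)$ equipped with the compact-open topology. Since $G$ is a smooth manifold, it is locally compact and Hausdorff, so the exponential law for the compact-open topology applies: a map $\lambda:G\to C^0(G,G)$ is continuous if and only if its adjoint $\Lambda:G\times G\to G$ is continuous. As $\Lambda$ is smooth, hence continuous, this yields continuity of $\lambda$ into $C^0(G,G)$, and therefore into $\mathrm{Aut}(G,\cdot)$ with its subspace topology. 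Combining the two steps, $\lambda$ is a continuous group homomorphism between Lie groups, and is thus smooth, proving the lemma.

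I expect the main (and essentially only) delicate point to be the passage through the compact-open topology, namely justifying that smoothness of $\Lambda$ in the $b$-variable really encodes continuity of the \emph{map into the function space} $\mathrm{Aut}(G,\cdot)$; the exponential law, valid precisely because $G$ is locally compact Hausdorff, is what makes this rigorous. As an alternative route that avoids the function-space topology altogether, one could instead differentiate $\Lambda(a,\cdot)$ at $b=e$ to obtain a smooth map $G\to\mathrm{GL}(\mathfrak{g})$, $a\mapsto d(\lambda_a)_e$, and then invoke the closed embedding $\mathrm{Aut}(G,\cdot)\hookrightarrow\mathrm{GL}(\mathfrak{g})$, $f\mapsto df_e$ (injective since $G$ is connected), to conclude smoothness of $\lambda$ by a more infinitesimal argument yielding the same conclusion.
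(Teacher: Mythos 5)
Your proposal is correct and follows essentially the same route as the paper's own proof: reduce to continuity via the automatic smoothness of continuous Lie group homomorphisms, observe that the joint map $(a,b)\mapsto a^{-1}\cdot(a\circ b)$ is smooth, and pass to continuity of $a\mapsto\lambda_a$ in the compact-open topology using the exponential law for locally compact spaces (the paper cites \cite[Theorem 46.11]{Munkres2000topology} for exactly this step). The alternative infinitesimal argument you sketch via $\mathrm{Aut}(G,\cdot)\hookrightarrow\mathrm{GL}(\mathfrak{g})$ is a reasonable extra, but the main argument already matches the paper.
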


\begin{proof}
Since both \( (G, \cdot) \) and \( \mathrm{Aut}(G, \cdot) \) are Lie groups, and \( \lambda \) is a group homomorphism, it suffices to prove that \( \lambda \) is continuous.
Observe that, by \eqref{lambdaspec} and the fact that the group operations on \( (G, \cdot) \) and \( \mathrm{Aut}(G, \cdot) \) are continuous (in fact, smooth), the map
\[
\tilde{\lambda} \colon G \times G \to G, \quad (a, b) \mapsto \lambda_a(b)
\]
is continuous.

Now, since any Lie group is a locally compact topological space, it follows from \cite[Theorem 46.11]{Munkres2000topology} that for each \( a \in G \), the map
\[
\lambda \colon G \to  \mathrm{Aut}(G, \cdot)\subset C^0(G, G), \quad a \mapsto \lambda_a
\]
is continuous when \( \mathrm{Aut}(G, \cdot) \) is viewed as a subspace of \( C^0(G, G) \) with the compact-open topology. This completes the proof.
\end{proof}

The connection between LSBs and regular subgroups of the affine group is expressed by the following proposition, which generalizes the classical correspondence for skew left braces (see \cite[Th.~4.2 and Prop.~4.3]{Guarnieri2017}). We sketch the proof in the smooth setting.

\begin{proposition}\label{proplambda}
Let \( (G, \cdot) \) be a connected real Lie group. Then there is a one-to-one correspondence between:
\begin{itemize}
    \item LSBs structures \( (G, \cdot, \circ) \), and
    \item regular Lie subgroups \( H \leq \mathrm{Aff}(G, \cdot) \),
\end{itemize}
such that \( (G, \circ) \simeq H \) as Lie groups. Moreover, two LSB structures on \( G \) are isomorphic if and only if the corresponding regular subgroups are conjugate in \( \mathrm{Aff}(G, \cdot) \) via an element of \( \mathrm{Aut}(G, \cdot) \).
\end{proposition}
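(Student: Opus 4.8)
The plan is to establish the correspondence by constructing maps in both directions and verifying they are mutually inverse, while tracking the Lie group structure throughout. First I would produce, from an LSB $(G,\cdot,\circ)$, a candidate regular subgroup of $\mathrm{Aff}(G,\cdot)$. Using the lambda-action $\lambda\colon(G,\circ)\to\mathrm{Aut}(G,\cdot)$ of \eqref{lambda}, I would define
\[
H \;=\; \bigl\{\,(a,\lambda_a)\;:\;a\in G\,\bigr\}\subseteq \mathrm{Aff}(G,\cdot).
\]
The key computation is that $H$ is a subgroup: one checks, using the group law $(a,f)(b,g)=(a\cdot f(b),fg)$ together with the identity $a\circ b=a\cdot\lambda_a(b)$ and the fact (already recorded in the excerpt) that $\lambda$ is a homomorphism, that $(a,\lambda_a)(b,\lambda_b)=(a\cdot\lambda_a(b),\lambda_a\lambda_b)=(a\circ b,\lambda_{a\circ b})$. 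This simultaneously shows closure and that the first-coordinate projection $\pi_1|_H$ intertwines the group law on $H$ with $\circ$, so $\pi_1|_H\colon H\to(G,\circ)$ is a group isomorphism; since $\lambda$ is smooth by Lemma~\ref{lambdasmooth}, the map $a\mapsto(a,\lambda_a)$ is a smooth section, making this an isomorphism of Lie groups, which yields $(G,\circ)\simeq H$.

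Next I would verify regularity in the sense of Definition~\ref{regsubgroup}: for fixed $a\in G$ we must find a unique $(x,f)\in H$ with $x\cdot f(a)=1$. Writing $(x,f)=(b,\lambda_b)$, the equation becomes $b\cdot\lambda_b(a)=b\circ a=1$, which in $(G,\circ)$ has the unique solution $b=\overline{a^{-1}}$ (the $\circ$-inverse of $a^{-1}$, or equivalently the element solving $b\circ a=1$). Uniqueness of $b$, together with the fact that $b$ determines $(b,\lambda_b)$, gives both existence and uniqueness, so $H$ is regular.

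For the reverse direction, given a regular Lie subgroup $H\leq\mathrm{Aff}(G,\cdot)$, the excerpt already notes that $\pi_1|_H\colon H\to G$ is a bijection; I would use its inverse to write each $a\in G$ uniquely as $a=\pi_1(a,f_a)$ with $(a,f_a)\in H$, and then define $a\circ b:=(a,f_a)\cdot b=a\cdot f_a(b)$, i.e.\ the image of $b$ under the affine action of the unique element of $H$ projecting to $a$. The regularity hypothesis is what guarantees this is well defined; I would check that $\circ$ is a smooth group law (associativity and the group axioms transporting along the bijection $\pi_1|_H$, which is a Lie group isomorphism onto its image $(G,\circ)$) and that the compatibility identity \eqref{eqLSB} holds, recovering an LSB whose lambda-action is exactly $f_a=\lambda_a$. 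Confirming these two constructions are mutually inverse is then a matter of unwinding the definitions.

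For the final isomorphism statement, I would argue that an LSB isomorphism $\varphi\colon(G,\cdot,\circ)\to(G,\cdot,\circ')$ is in particular an element of $\mathrm{Aut}(G,\cdot)$, and that it conjugates the corresponding regular subgroups; conversely, conjugation by $f\in\mathrm{Aut}(G,\cdot)$ sends $H$ to another regular subgroup whose induced $\circ$-operation makes $f$ an LSB isomorphism. The main obstacle I anticipate is not any single algebraic identity but the smoothness bookkeeping: ensuring that the transported operation $\circ$ (in the reverse direction) is genuinely smooth and that $\pi_1|_H$ is a diffeomorphism rather than merely a continuous bijection. This is handled by the closedness of $H$ recorded before Definition~\ref{regsubgroup} (so $H$ is an embedded Lie subgroup) and by invoking Lemma~\ref{lambdasmooth} in the forward direction; everything else is a routine, if slightly lengthy, verification parallel to the algebraic case in \cite[Th.~4.2 and Prop.~4.3]{Guarnieri2017}.
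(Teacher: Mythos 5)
Your proposal is correct and follows essentially the same route as the paper's proof: the forward direction via $H=\{(a,\lambda_a)\,:\,a\in G\}$ together with Lemma~\ref{lambdasmooth}, the reverse direction by transporting the group law of $H$ through the diffeomorphism $\pi_1|_H$, and the isomorphism statement handled by the purely algebraic argument of \cite[Prop.~4.3]{Guarnieri2017}. You even make explicit the regularity check that the paper leaves implicit (just note the unique solution of $b\circ a=1$ is $\bar a$, the $\circ$-inverse of $a$, rather than $\overline{a^{-1}}$ --- your parenthetical characterization is the correct one).
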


\begin{proof}
Suppose \( (G, \cdot, \circ) \) is {an LSB}. Define the regular Lie subgroup  \( H = \{ (a, \lambda_a) \mid a \in G \} \leq \mathrm{Aff}(G, \cdot) \), where $\lambda_a$ is given by \eqref{lambdaspec}.  The map
\[
\Psi \colon G \to H, \quad a \mapsto (a, \lambda_a)
\]
is a group homomorphism from \( (G, \circ) \) to \( H \), since
\[
\Psi(a \circ b) = (a \lambda_a(b), \lambda_{a \circ b}) = (a, \lambda_a)(b, \lambda_b) = \Psi(a)\Psi(b).
\]
Smoothness of \( \lambda \) (cf.\ Lemma~\ref{lambdasmooth}) ensures that \( \Psi \) is a diffeomorphism, and hence \( H \) is a regular Lie subgroup of \( \mathrm{Aff}(G, \cdot) \),  isomorphic to \( (G, \circ) \) as a Lie group.

Conversely, let \( H \subseteq \mathrm{Aff}(G, \cdot) \) be a regular Lie subgroup. Then the projection \( \pi_1|_H \colon H \to G \) is a diffeomorphism. Transporting the group law of \( H \) to \( G \) via this diffeomorphism defines a smooth operation \( \circ \), given by
\[
a \circ b := \pi_1\left( (\pi_1|_H)^{-1}(a) \cdot (\pi_1|_H)^{-1}(b) \right)=a\cdot f(b),
\]
where \( (\pi_1|_H)^{-1}(a) = (a, f) \). It follows that \( (G, \circ) \) is a Lie group and (a direct computation shows that) \( (G, \cdot, \circ) \) is {an LSB}.

The final assertion concerning the correspondence between isomorphism classes of LSBs and conjugacy classes of regular subgroups is of algebraic nature and can be found in  \cite[Prop.~4.3]{Guarnieri2017}. The proof is omitted.
\end{proof}

\subsection{Post-Lie groups}\label{postandLSB}
We recall the definition of a  post-Lie group given in \cite{Bai2023}. 

\begin{definition}\label{defpostliegroup}
    A post Lie group is a triple $(G,\cdot,\triangleright)$ where $(G,\cdot)$ is a real  Lie group and $\triangleright: G \times G \rightarrow G$ is a smooth map such that: 
    \begin{enumerate}
        \item 
        For every $a \in G$ the map 
        \begin{equation}\label{Ltriangle}
        L^{\triangleright}_{a}:G \rightarrow  G, b \mapsto  L^{\triangleright}_{a}(b):=a \triangleright b
        \end{equation} 
        is an automorphism of $(G, \cdot)$, i.e. $L^{\triangleright}_{a}(b\cdot c)=(a \triangleright b)\cdot (a \triangleright c)$, for all $a,b,c \in G$;
        \item For all $a,b,c \in G$,
        $$(a \cdot (a \triangleright b)) \triangleright c= a \triangleright ( b \triangleright c).$$
    \end{enumerate}
\end{definition}
Given two post Lie groups $(G_1,\cdot_1,\triangleright_1),(G_2,\cdot_2,\triangleright_2)$ a smooth map $\Phi: G_1 \rightarrow G_2$ is said to be an homomorphism between the post Lie groups $(G_1,\cdot_1,\triangleright_1)$ and $(G_2,\cdot_2,\triangleright_2)$, if for every $a,b \in G_1$ we have $\Phi(a \cdot_1 b)=\Phi(a) \cdot_2 \Phi(b)$ and $\Phi(a \triangleright_1 b)=\Phi(a) \triangleright_2 \Phi(b)$.
Therefore we can speak about the category of post-Lie group $\mathbf{PLG}$, 
whose objects are post-Lie groups and whose morphisms are homomorphisms between the post-Lie groups.
\begin{proposition}
The categories $\mathbf{LSB}$ and $\mathbf{PLG}$ are isomorphic.
\end{proposition}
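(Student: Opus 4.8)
The plan is to construct an explicit pair of mutually inverse functors that are the identity on underlying manifolds and on morphisms, so that the two categories are genuinely \emph{isomorphic} rather than merely equivalent. The bridge in both directions is the $\lambda$-action of \eqref{lambdaspec}. On objects I would send an LSB $(G,\cdot,\circ)$ to $(G,\cdot,\triangleright)$ with
\[
a \triangleright b := \lambda_a(b) = a^{-1}\cdot(a\circ b),
\]
and conversely a post-Lie group $(G,\cdot,\triangleright)$ to $(G,\cdot,\circ)$ with $a\circ b := a\cdot(a\triangleright b)$. These assignments are visibly inverse to one another: from $\circ$ one recovers $a\cdot(a\triangleright b)=a\cdot a^{-1}\cdot(a\circ b)=a\circ b$, and symmetrically starting from $\triangleright$.

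The LSB-to-PLG direction is the quick one. Axiom (1) of Definition~\ref{defpostliegroup} is exactly the statement that $\lambda_a\in\mathrm{Aut}(G,\cdot)$, which holds since $\lambda$ takes values in $\mathrm{Aut}(G,\cdot)$, and smoothness of $\triangleright$ is Lemma~\ref{lambdasmooth}. For axiom (2) I would note that $a\cdot(a\triangleright b)=a\circ b$, so the left-hand side $(a\cdot(a\triangleright b))\triangleright c$ equals $\lambda_{a\circ b}(c)$ while the right-hand side $a\triangleright(b\triangleright c)$ equals $(\lambda_a\circ\lambda_b)(c)$; the two coincide because $\lambda\colon(G,\circ)\to\mathrm{Aut}(G,\cdot)$ is a group homomorphism, giving $\lambda_{a\circ b}=\lambda_a\circ\lambda_b$.

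The real work, and where I expect the main obstacle, is the reverse direction: checking that $(G,\cdot,\triangleright)\mapsto(G,\cdot,\circ)$ produces a bona fide Lie group $(G,\circ)$ and an LSB. Compatibility \eqref{eqLSB} is short, since expanding both sides via the fact that $L^{\triangleright}_a$ is a $\cdot$-automorphism yields $a\cdot(a\triangleright b)\cdot(a\triangleright c)$ on each side. The group axioms require more care. Associativity follows from axiom (2): writing out $(a\circ b)\circ c$ and $a\circ(b\circ c)$ and applying $(a\cdot(a\triangleright b))\triangleright c=a\triangleright(b\triangleright c)$ together with the automorphism property reduces both to $a\cdot(a\triangleright b)\cdot(a\triangleright(b\triangleright c))$. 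For the identity, $L^{\triangleright}_a$ fixes $e$, so $a\circ e=a$; to obtain $e\circ a=a$ I would first deduce $L^{\triangleright}_e=\mathrm{id}$ by setting $b=e$ in axiom (2) and invoking injectivity of $L^{\triangleright}_a$. Defining $\overline{a}:=(L^{\triangleright}_a)^{-1}(a^{-1})$ gives a right inverse, $a\circ\overline{a}=e$, and a standard monoid argument (associativity plus a two-sided identity plus right inverses forces a group) upgrades this to a two-sided inverse. Smoothness of $\circ$ is immediate from smoothness of $\cdot$ and $\triangleright$, and smoothness of $\circ$-inversion follows from the general fact that a group which is a smooth manifold with smooth multiplication is automatically a Lie group.

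Finally, morphisms match up effortlessly. A smooth $\cdot$-homomorphism $\varphi$ preserves $\circ$ if and only if it preserves $\triangleright$: using $\varphi(a^{-1})=\varphi(a)^{-1}$, the two identities $\varphi(a\triangleright b)=\varphi(a)^{-1}\cdot\varphi(a\circ b)$ and $\varphi(a\circ b)=\varphi(a)\cdot\varphi(a\triangleright b)$ convert one condition into the other. Hence the object-level bijection extends to a bijection of morphisms that is literally the identity on underlying maps, so the two functors compose to the respective identity functors. This establishes the isomorphism of categories $\mathbf{LSB}\cong\mathbf{PLG}$.
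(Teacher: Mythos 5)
Your proposal is correct and follows essentially the same route as the paper: both use exactly the same pair of mutually inverse functors, $(G,\cdot,\circ)\mapsto(G,\cdot,\triangleright)$ with $a\triangleright b=\lambda_a(b)$ and $(G,\cdot,\triangleright)\mapsto(G,\cdot,\circ)$ with $a\circ b=a\cdot(a\triangleright b)$, together with Lemma~\ref{lambdasmooth} as the smooth-category input. The only difference is that the paper outsources the algebraic verifications (that both assignments are well defined on objects and morphisms, and that the functors are mutually inverse) to \cite[Prop.~3.22, Thm.~3.24, Thm.~3.25]{Bai2023}, whereas you carry them out by hand, and your checks (axiom (2) via $\lambda_{a\circ b}=\lambda_a\lambda_b$, associativity of $\circ$ from axiom (2), $L^{\triangleright}_e=\mathrm{id}$ by injectivity, right inverses upgraded to two-sided ones, automatic smoothness of inversion) are all sound.
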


\begin{proof}
As shown in \cite[Prop. 3.22]{Bai2023}, there is a functor
\[
F: \mathbf{PLG} \rightarrow \mathbf{LSB}, \quad (G,\cdot, \triangleright) \mapsto (G,\cdot,\circ)
\]
where for all \(a,b \in G\) the operation \(\circ\) is defined algebraically by
\[
a \circ b := a \cdot (a \triangleright b).
\]

Conversely, by \cite[Thm 3.24]{Bai2023}, one defines a functor
\[
G: \mathbf{LSB} \rightarrow \mathbf{PLG}, \quad (G,\cdot, \circ) \mapsto (G,\cdot,\triangleright)
\]
where, for all \(a,b \in G\), the post-Lie product is given by
\[
a \triangleright b := \lambda_a(b),
\]
with \(\lambda_a(b)\) defined by \eqref{lambdaspec}.
Since, by Lemma \ref{lambdasmooth}, \(\lambda: G  \to \mathrm{Aut}(G, \cdot)\)  is smooth, this construction is well-defined in the category of smooth manifolds. 
Finally, by \cite[Thm 3.25]{Bai2023}, the two functors \(F\) and \(G\) are mutually inverse. Therefore, the categories \(\mathbf{LSB}\) and \(\mathbf{PLG}\) are isomorphic.
\end{proof}

\subsection{Post-Lie algebras}\label{subsecPLA}
Although the correspondence between Lie groups and Lie algebras is classical and well known, we briefly recall it here to fix notation.
Given any Lie group \( (G, \cdot) \), there is an associated Lie algebra \( \mathrm{Lie}(G, \cdot) = (\mathfrak{g}, [\cdot,\cdot]_{\cdot}) \), defined as the tangent space \( T_eG \) at the identity element \( e \), endowed with the Lie bracket induced by the left-invariant vector fields on \( G \).
Conversely, for every finite-dimensional real Lie algebra \( (\mathfrak{g}, [\cdot,\cdot]) \), there exists a real Lie group \( (G, \cdot) \) whose Lie algebra is \( (\mathfrak{g}, [\cdot,\cdot]_{\cdot}) \). In fact, such a group \( G \) can be obtained as a quotient of the unique simply-connected Lie group integrating \( (\mathfrak{g}, [\cdot,\cdot]) \).

\begin{definition} \label{postLieAlgebras}
    A \emph{post-Lie algebra} (PLA) is a  quadruple \( (\mathfrak{g}, [\cdot,\cdot]_{\cdot}, [\cdot,\cdot]_{\circ},  \triangleright )\) such that both \( (\mathfrak{g}, [\cdot,\cdot]_{\cdot}) \) and \( (\mathfrak{g}, [\cdot,\cdot]_{\circ}) \) are real Lie algebras (on the same finite dimensional real vector space $\mathfrak{g}$), and     \[
    \triangleright: \mathfrak{g} \times \mathfrak{g} \rightarrow \mathfrak{g}, \quad (\zeta,\eta) \mapsto \zeta \triangleright \eta,
    \]
    is a   bilinear map, called the {\em post-Lie product}, 
    satisfying the following identities for all \( \zeta, \eta, \gamma \in \mathfrak{g} \):
    \begin{enumerate}
        \item \( [\zeta,\eta]_{\circ} - [\zeta,\eta]_{\cdot} = \zeta \triangleright \eta - \eta \triangleright \zeta \),
        \item \( \zeta \triangleright [\eta,\gamma]_{\cdot} = [\zeta \triangleright \eta,\gamma]_{\cdot} {+} [\eta, \zeta \triangleright \gamma]_{\cdot} \),
        \item \( [\zeta,\eta]_{\circ} \triangleright \gamma = \zeta \triangleright (\eta \triangleright \gamma) - \eta \triangleright (\zeta \triangleright \gamma) \).
    \end{enumerate}
\end{definition}

\begin{remark}
 Notice that  by (1) in Definition \ref{postLieAlgebras} the Lie algebra structure  $[\cdot,\cdot]_{\circ}$
 is uniquely determined by the pair $([\cdot,\cdot]_{\cdot},  \triangleright)$.
 Hence, one can also define (see, e.g. \cite{Bai2023} and  \cite{Burde2012affineaction}) a PLA as a triple 
 \( (\mathfrak{g}, [\cdot,\cdot]_{\cdot}, \triangleright) \), where \( (\mathfrak{g}, [\cdot,\cdot]_{\cdot}) \) is a real Lie algebra and 
    \[
    \triangleright: \mathfrak{g} \times \mathfrak{g} \to \mathfrak{g}, \quad (\zeta,\eta) \mapsto \zeta \triangleright \eta
    \]
    is a bilinear map satisfying:
    \begin{enumerate}
        \item \( \zeta \triangleright [\eta,\gamma]_{\cdot} = [\zeta \triangleright \eta, \gamma]_{\cdot} {+} [\eta, \zeta \triangleright \gamma]_{\cdot} \),
        \item \( ([\zeta,\eta]_{\cdot} + \zeta \triangleright \eta - \eta \triangleright \zeta) \triangleright \gamma = \zeta \triangleright (\eta \triangleright \gamma) - \eta \triangleright (\zeta \triangleright \gamma) \).
    \end{enumerate}
    Then one defines the so-called \emph{sub-adjacent Lie algebra} \( (\mathfrak{g}, [\cdot,\cdot]_{\circ}) \) via
    \[
    [\zeta,\eta]_{\circ} := [\zeta,\eta]_{\cdot} + \zeta \triangleright \eta - \eta \triangleright \zeta.
    \]
    Indeed, {it} is not hard to  verify that the quadruple  
    \( (\mathfrak{g}, [\cdot,\cdot]_{\cdot}, [\cdot,\cdot]_{\circ},  \triangleright )\) is a  PLA. 
   \end{remark}

An important class of examples of post-Lie algebras is given by {\em pre-Lie algebras}
(also known as  {\em left-symmetric algebra{s}}), which arise when the Lie bracket \( [\cdot,\cdot]_{\cdot} \) is trivial. 
In this setting, the post-Lie product defines a compatible Lie algebra structure via antisymmetrization.

\begin{definition}\label{preliealg} \label{preLieAlgebras}
A \emph{pre-Lie algebra} is a triple  \( (\mathfrak{g}, [\cdot,\cdot]_{\circ} ,  \triangleright)\) consisting of a    finite dimensional Lie algebra
    \( (\mathfrak{g}, [\cdot,\cdot]_{\circ} )\)   
    together with a bilinear map 
     \[
    \triangleright : \mathfrak{g} \times \mathfrak{g} \rightarrow \mathfrak{g}, \quad (\zeta, \xi) \mapsto \zeta \triangleright \xi
    \]
 satisfying the identity
    \begin{equation}\label{idpre}
    \zeta \triangleright (\xi \triangleright \gamma) - \xi \triangleright (\zeta \triangleright \gamma)
    = (\zeta \triangleright \xi) \triangleright \gamma - (\xi \triangleright \zeta) \triangleright \gamma
    \quad \text{for all } \zeta, \xi, \gamma \in \mathfrak{g}.
    \end{equation}
   \end{definition}

The \emph{category of post-Lie algebras}, denoted by \( \mathbf{PLA} \), is the category whose objects are PLAs  \( (\mathfrak{g}, [\cdot,\cdot]_{\cdot},  [\cdot,\cdot]_{\circ} )\) and morphisms between two PLAs \( (\mathfrak{g}_1, [\cdot,\cdot]_{\cdot_1}, [\cdot,\cdot]_{\circ_1},  \triangleright_{1} )\) and \ \( (\mathfrak{g}_2, [\cdot,\cdot]_{\cdot_2}, [\cdot,\cdot]_{\circ_2},  \triangleright_{2}) \) are linear maps 
 $\phi: \mathfrak{g}_1 \to \mathfrak{g}_2$ such that for all \( \xi, \eta \in \mathfrak{g}_1 \),
    \[
    \phi([\xi,\eta]_{\cdot_1}) 
    = [\phi(\xi), \phi(\eta)]_{\cdot_2} \quad \text{and} \quad 
     \phi (\xi \triangleright_{1} \eta)= 
     \phi(\xi) \triangleright_{2} \phi(\eta).
    \]
    (note that  also $\phi([\xi,\eta]_{\circ_1}) 
    = [\phi(\xi), \phi(\eta)]_{\circ_2}$ holds true  by (1) in Definition \ref{postLieAlgebras}).

PLAs and pre-Lie algebras have attracted significant attention in recent years since  they  arise in a variety of contexts including affine manifolds, Lie group actions, crystallographic groups, affine representations of Lie algebras, quantum field theory, operad theory, Rota--Baxter operators, and deformation theory. For further details on pre- and post-Lie algebras, we refer the reader to \cite{Burde2012affineaction, Burde2013postliealgebra, Burde2016postliealgebrapair, Burde2018postliealgebranilpotent, Burde2019rotabaxterpostliealgebra}, and the references therein. A comprehensive survey is also provided in \cite{Burde2021crystallographicpostliealgebra}.

\vskip 0.3cm
Given {an LSB} \( (G, \cdot, \circ) \), one can associate two Lie algebras:
\[
\mathrm{Lie}(G, \cdot) := (\mathfrak{g}, [\cdot,\cdot]_{\cdot}), \qquad 
\mathrm{Lie}(G, \circ) := (\mathfrak{g}, [\cdot,\cdot]_{\circ}),
\]
and define a bilinear map \( \triangleright: \mathfrak{g} \times \mathfrak{g} \to \mathfrak{g} \) by
\begin{equation}\label{trianglelambda}
\xi \triangleright \eta := \lambda_{*e}(\xi)(\eta), \qquad \text{for all } \xi, \eta \in \mathfrak{g},
\end{equation}
where \( \lambda_{*e} \) is the differential at the identity \( e \) of the map
\[
\lambda: (G, \circ) \to \mathrm{Aut}(G, \cdot),
\]
defined by \eqref{lambda}. In particular,
\[
\lambda_{*e}: (\mathfrak{g}, [\cdot,\cdot]_{\circ}) \longrightarrow \mathrm{Der}\left(\mathfrak{g}, [\cdot,\cdot]_{\cdot}\right),
\]
where  \( \mathrm{Der}(\mathfrak{g}, [\cdot,\cdot]_{\cdot}) \) denotes the Lie algebra of derivations of \( (\mathfrak{g}, [\cdot,\cdot]_{\cdot}) \), that is, the Lie algebra of the automorphism group \( \mathrm{Aut}(G, \cdot) \).
As shown in \cite{Bai2023}, the quadruple
$(\mathfrak{g}, [\cdot,\cdot]_{\cdot}, [\cdot,\cdot]_{\circ}, \triangleright)$
forms a PLA. 
This construction defines a functor from the category of Lie skew braces \( \mathbf{LSB} \) to the category of post-Lie algebras \( \mathbf{PLA} \):
\[
D: \mathbf{LSB} \to \mathbf{PLA},
\]
which acts on objects and morphisms as follows:
\[
\begin{aligned}
D(G, \cdot, \circ) &= \bigl(\mathfrak{g}, [\cdot,\cdot]_{\cdot}, [\cdot,\cdot]_{\circ},  \triangleright\bigr), \\
D(\varphi) &= \varphi_{*e},
\end{aligned}
\]
for any homomorphism \( \varphi: (G_1, \cdot_1, \circ_1) \to (G_2, \cdot_2, \circ_2) \) of LSBs,
where the post-Lie product $\triangleright$ is defined in  \eqref{trianglelambda}.

\begin{definition}
A post-Lie algebra \( (\mathfrak{g}, [\cdot,\cdot]_{\cdot}, [\cdot,\cdot]_{\circ},  \triangleright) \) is  \emph{integrable} if there exists {an LSB} \( (G, \cdot, \circ) \) such that
\[
D(G, \cdot, \circ) = (\mathfrak{g}, [\cdot,\cdot]_{\cdot}, [\cdot,\cdot]_{\circ}, \triangleright).
\]
\end{definition}
{
\begin{remark}\label{rem:connected-integrability}
Let \( (\mathfrak{g}, [\cdot,\cdot]_{\cdot}, [\cdot,\cdot]_{\circ},  \triangleright )\)  be an integrable post--Lie algebra (PLA).
Then \( (\mathfrak{g}, [\cdot,\cdot]_{\cdot}, [\cdot,\cdot]_{\circ},  \triangleright )\) is integrable  by a
\emph{connected} LSB. Indeed, if \(D(G, \cdot, \circ) = (\mathfrak{g}, [\cdot,\cdot]_{\cdot}, [\cdot,\cdot]_{\circ}, \triangleright)\) for some (possibly disconnected)
LSB $(G,\cdot,\circ)$, let $G_0$ be the identity component of $G$
(as a manifold). Since the two group structures share the same underlying
manifold, $G_0$ is a Lie subgroup of both $(G,\cdot)$ and $(G,\circ)$, and the
$\lambda$-maps preserve $G_0$. Hence $(G_0,\cdot,\circ)$ is a connected LSB, and since $T_eG_0=T_eG$ we have
\[
D(G_0,\cdot,\circ)=D(G,\cdot,\circ)= (\mathfrak{g}, [\cdot,\cdot]_{\cdot}, [\cdot,\cdot]_{\circ}, \triangleright).
\]
\end{remark}}

As already mentioned in the Introduction, the problem of determining which post-Lie algebras are integrable  is highly nontrivial and remains open, as originally posed in \cite{Bai2023}. A simple example of non integrable PLA is the following.

\begin{example}\label{exnonint}
Consider the pre-Lie algebra \( \left(\mathbf{M}_n(\mathbb{R}), [\cdot,\cdot]_{\cdot} = 0, [\cdot,\cdot]_{\circ},\ \triangleright \right) \), where \( [X,Y]_{\circ} := XY - YX \) is the usual matrix commutator, and the post-Lie product is defined by
\[
X \triangleright Y := XY,
\]
i.e., matrix multiplication. In this case, the Lie algebra \( (\mathbf{M}_n(\mathbb{R}), [\cdot,\cdot]_{\cdot} = 0) \) is abelian, while \( (\mathbf{M}_n(\mathbb{R}), [\cdot,\cdot]_{\circ}) \) is reductive (since it is isomorphic to \( \mathfrak{gl}_n(\mathbb{R}) \)) and not solvable for \( n \geq 2 \). 
Therefore, this PLA does not satisfy condition \textbf{(S2)} in Theorem~\ref{mainteor},.
Hence, it is not integrable.

Now consider the case \( n = 1 \). In this situation, \( \mathbf{M}_1(\mathbb{R}) \cong \mathbb{R} \), and both Lie brackets vanish:
\[
[x,y]_{\cdot} = 0, \qquad [x,y]_{\circ} = xy - yx = 0,
\]
so the underlying Lie algebras are abelian. The post-Lie product becomes \( x \triangleright y = xy \), the standard multiplication in \( \mathbb{R} \). Suppose, for contradiction, that this structure integrates to {an LSB} which is necessarily 
 \( (\mathbb{R}, +, +) \).
For such a trivial brace, we have
\[
\lambda_a(b) = -a + (a + b) = b, \qquad \text{for all } a, b \in \mathbb{R},
\]
so the \( \lambda \)-map is the identity on \( \mathbb{R} \), i.e., \( \lambda(\mathbb{R}) = \mathrm{id}_{\mathbb{R}} \in \mathrm{Aut}(\mathbb{R}) \), and hence its differential at the identity is the zero map:
$\lambda_{*1} = 0$. By  \eqref{trianglelambda}, this implies
\[
x \triangleright y = \lambda_{*1}(x)(y) = 0(y) = 0,
\]
which contradicts  \( x \triangleright y = xy \). 
Thus, even in the case \( n = 1 \), the PLA is not integrable.
\end{example}

\noindent
{\bf Convention.}
In what follows, and with a slight abuse of notation, we shall denote a post-Lie algebra simply by the triple
\( (\mathfrak{g}, [\cdot,\cdot]_{\cdot}, [\cdot,\cdot]_{\circ})\)
omitting the post-Lie product~\(\triangleright\), which is understood to be fixed throughout.

\subsection{Affine actions}\label{affineactions}

We begin by recalling the notion of a simply transitive affine action. 
\begin{definition}
Let \( (G, \circ) \) and \( (K, \cdot) \) be two connected real Lie groups. An \emph{affine action} of \( G \) on \( K \) is a Lie group homomorphism
\[
\rho: (G, \circ) \longrightarrow \mathrm{Aff}(K, \cdot) := K \rtimes \mathrm{Aut}(K, \cdot).
\]
We say that the affine  action is \emph{simply transitive} if  for all \( k_1, k_2 \in K \), there exists a unique \( g \in G \) such that \( \rho(g)(k_1) = k_2 \).
\end{definition}

In the 1970s, Milnor  \cite{Milnor1977} posed a fundamental question:
\begin{quote}
\emph{Does every solvable, connected and simply-connected Lie group \((G, \circ)\) of dimension $n$ admit a simply transitive affine action on \((\mathbb{R}^n, +)\)?}
\end{quote}

This question stimulated extensive research and initially led to several attempts to give a positive answer. However, it was eventually shown that the answer is negative. Specifically, Benoist proved in \cite{Benoist1995nil} the existence of an 11-dimensional solvable (even nilpotent), connected and simply connected Lie group \((G, \circ)\) that does \emph{not} admit a simply transitive affine action on \((\mathbb{R}^{11}, +)\).
Later,  Burde and Grunewald \cite{Burde1995} generalize this to a family of examples in dimension $11$ in  and also exhibited a new example in dimension $10$. 
The core of their argument is to exhibit Lie groups $(G,\circ)$
whose Lie algebra $(\mathfrak g,[\cdot,\cdot]_\circ)$ admits \emph{no}
post-Lie product $\triangleright$ capable of endowing $\mathfrak g$ with a
pre-Lie structure $(\mathfrak g,[\cdot,\cdot]_\circ,[\cdot,\cdot]_{\!\cdot}=0)$.

On the other hand, if one does not insist on the affine action being on an abelian Lie group, a positive result can still be obtained:

\begin{theorem}[Dekimpe \cite{Dekimpe1999semisimple}]\label{dekimpesolvable}
Given a connected and simply-connected solvable Lie group \( G \), there exists a nilpotent Lie group \( K \) such that \( G \) acts simply transitively by affine transformations on \( K \).
\end{theorem}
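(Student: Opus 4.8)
The plan is to realise $G$ as a simply transitive group of affine motions of its \emph{nilshadow}, following the semisimple splitting construction. Write $\mathfrak{g} = \mathrm{Lie}(G, \circ)$ with bracket $[\cdot,\cdot]_\circ$. Since $\mathfrak{g}$ is solvable, Lie's theorem guarantees that the operators $\mathrm{ad}_X$ $(X \in \mathfrak{g})$ are simultaneously triangularisable over $\mathbb{C}$; let $\mathrm{ad}_X = s_X + n_X$ be the Jordan decomposition into its semisimple and nilpotent parts. The assignment $s \colon X \mapsto s_X$ is then linear, each $s_X$ is a derivation of $(\mathfrak{g},[\cdot,\cdot]_\circ)$, the $s_X$ mutually commute, and $s$ vanishes on the derived subalgebra $[\mathfrak{g},\mathfrak{g}]_\circ$ (whose image under $\mathrm{ad}$ consists of nilpotent operators). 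Define a new bracket on the vector space $\mathfrak{g}$ by
\[
[X,Y]_\cdot := [X,Y]_\circ - s_X(Y) + s_Y(X).
\]
By a classical result in the structure theory of solvable Lie algebras (the nilshadow construction of Auslander and Mostow), $(\mathfrak{g},[\cdot,\cdot]_\cdot)$ is \emph{nilpotent}; let $M$ be the associated simply-connected nilpotent Lie group and set $\mathfrak{m} = (\mathfrak{g},[\cdot,\cdot]_\cdot)$. The commuting semisimple derivations $s_X$ are derivations of $\mathfrak{m}$ as well, so the abelian subalgebra $\mathfrak{t} := s(\mathfrak{g}) \subseteq \mathrm{Der}(\mathfrak{m})$ exponentiates to an action of $T := \mathbb{R}^{\dim \mathfrak{t}}$ on $M$ by automorphisms.

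Next I would build the affine action. Consider the Lie algebra $\mathfrak{m} \rtimes \mathrm{Der}(\mathfrak{m}) = \mathrm{Lie}(\mathrm{Aff}(M))$ and the linear map
\[
\rho_* \colon \mathfrak{g} \longrightarrow \mathfrak{m} \rtimes \mathrm{Der}(\mathfrak{m}), \qquad X \longmapsto (X, s_X).
\]
A direct computation using the bracket of a semidirect product gives $[(X,s_X),(Y,s_Y)] = ([X,Y]_\cdot + s_X(Y) - s_Y(X),\, [s_X,s_Y]) = ([X,Y]_\circ,\, 0)$, and since $s$ kills $[\mathfrak{g},\mathfrak{g}]_\circ$ we have $s_{[X,Y]_\circ} = 0$; hence the right-hand side equals $\rho_*([X,Y]_\circ)$ and $\rho_*$ is a Lie algebra homomorphism. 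As $G$ is connected and simply connected, $\rho_*$ integrates to a Lie group homomorphism $\rho \colon G \to \mathrm{Aff}(M)$ whose image lies in $M \rtimes T$. This $\rho$ is the candidate affine action, and its associated post-Lie product is precisely $X \triangleright Y = s_X(Y)$.

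It remains to prove simple transitivity, that is, that the orbit map $o \colon G \to M$, $g \mapsto \rho(g)(e_M)$, is a diffeomorphism. Because every $t \in T$ fixes $e_M$, writing an element of $M \rtimes T$ in the form $\rho(g)\cdot t$ and evaluating at $e_M$ shows that $o(g) = m$ precisely when $(m,e) = \rho(g)\cdot t$ for some $t \in T$. Thus $o$ is a bijection if and only if $M \rtimes T$ factorises as $M \rtimes T = \rho(G)\cdot T$ with $\rho(G)\cap T = \{e\}$, a Zappa--Sz\'ep type factorisation of the kind exploited in Example~\ref{exLSBzappa}. At the infinitesimal level this factorisation is immediate: $\rho_*(\mathfrak{g})$ and $\mathfrak{t}$ are subalgebras with $\rho_*(\mathfrak{g}) \cap \mathfrak{t} = 0$ and $\rho_*(\mathfrak{g}) + \mathfrak{t} = \mathfrak{m} \rtimes \mathfrak{t}$, since $(X,s_X) - (0,s_X) = (X,0)$ recovers all of $\mathfrak{m}$. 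Upgrading this to a global diffeomorphic factorisation of the simply-connected solvable group $M \rtimes T$ is the main obstacle: a linear complement of subalgebras need not, in general, integrate to a global product decomposition, and one must invoke the factorisation technique for simply-connected solvable Lie groups (applied inductively along the derived series, where each extension step is modelled on a polynomial bijection of Euclidean spaces). Once this factorisation is in hand, $o$ is a smooth bijection that is a local diffeomorphism at $e$ (indeed $o_{*e} = \mathrm{id}_{\mathfrak{g}}$ under $T_{e_M}M = \mathfrak{m} = \mathfrak{g}$), hence a diffeomorphism, so $\rho$ is simply transitive and the theorem follows. The only other point requiring external input is the nilpotency of the nilshadow bracket $[\cdot,\cdot]_\cdot$, which I would import from the structure theory of solvable Lie algebras rather than reprove.
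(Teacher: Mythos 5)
The paper itself does not prove this statement: it is imported directly from Dekimpe's work on semisimple splittings, and your proposal is correctly aimed at reconstructing that argument via the nilshadow. However, its foundation is false as stated. You define $s_X$ to be the Jordan semisimple part of $\mathrm{ad}_X$ and claim that $X \mapsto s_X$ is linear and that the $s_X$ mutually commute. Both claims already fail for the two-dimensional solvable algebra $\mathfrak{g}=\mathrm{aff}(\mathbb{R})$ with $[X,Y]_\circ=Y$: here $\mathrm{ad}_X$ is semisimple, so $s_X=\mathrm{ad}_X$; $\mathrm{ad}_Y$ is nilpotent, so $s_Y=0$; but $\mathrm{ad}_{X+Y}$ is diagonalisable (eigenvalues $0$ and $1$), hence $s_{X+Y}=\mathrm{ad}_X+\mathrm{ad}_Y\neq s_X+s_Y$, and moreover $[s_X,s_{X+Y}]=[\mathrm{ad}_X,\mathrm{ad}_Y]=\mathrm{ad}_{[X,Y]_\circ}=\mathrm{ad}_Y\neq 0$. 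Consequently your bracket is not even bilinear: $[X+Y,X]_\cdot=-Y-s_{X+Y}(X)+s_X(X+Y)=-Y+Y+Y=Y$, whereas $[X,X]_\cdot+[Y,X]_\cdot=0+(-Y-0+Y)=0$. Everything downstream collapses with this: the assertion that $(\mathfrak{g},[\cdot,\cdot]_\cdot)$ is a (nilpotent) Lie algebra, and the verification that $\rho_*$ is a Lie algebra homomorphism, which uses exactly the linearity, the commutativity $[s_X,s_Y]=0$, and $s_{[X,Y]_\circ}=0$.

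The genuine nilshadow/semisimple-splitting construction avoids this by \emph{not} taking pointwise Jordan decompositions. One fixes the nilradical $\mathfrak{n}$ and a nilpotent subalgebra $\mathfrak{h}$ (e.g.\ a Cartan subalgebra) with $\mathfrak{g}=\mathfrak{h}+\mathfrak{n}$; on a \emph{nilpotent} algebra of operators the generalized weight decomposition shows that semisimple parts do depend linearly on the element and do commute, and one then defines the map $S$ on all of $\mathfrak{g}$ by forcing it to vanish on $\mathfrak{n}$. In the example above this gives $S(X+Y)=\mathrm{ad}_X$, which is \emph{not} the Jordan semisimple part of $\mathrm{ad}_{X+Y}$. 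Proving that such an $S$ exists, is well defined, is valued in commuting semisimple derivations, and satisfies $S(S(U)V)=0$-type identities so that your computation of $\rho_*$ goes through, is precisely the content of Dekimpe's (and Auslander's) splitting theorem — it is the part you have assumed rather than proved, and it cannot be summoned from "a classical result" in the form you stated it. Finally, the simple transitivity of the orbit map, which you yourself flag as "the main obstacle", is also left open; so even after repairing the algebraic input the argument remains incomplete, although that last step is indeed handled in the literature (e.g.\ by induction on $\dim G$, or by showing the orbit map is an everywhere local diffeomorphism together with a covering/properness argument using that $\exp$ is a diffeomorphism for the simply connected nilpotent group $M$).
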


The following proposition illustrates a natural connection between LSB structures on a Lie group and simply transitive affine actions. While this connection is not always stated explicitly, it is implicit in several works (see, e.g. \cite{Burde2009}-\cite{Burde2024postliealgebraperfect}), especially at the level of post-Lie algebras and in the case where $(K, \cdot_K)$ is simply-connected and nilpotent (in this case  one speaks of   {\em simply transitive Nill-affine actions}). This proposition makes the relationship precise for our purposes in the general setting, namely for general connected real Lie groups.

\begin{theorem}\label{propLSBaff}
Let \( (G, \circ_G) \) be a connected real Lie group. The following are equivalent:
\begin{enumerate}
    \item There exists a Lie group structure \( \cdot_G \) on \( G \) such that \( (G, \cdot_G, \circ_G) \) is {an LSB};
    \item There exists a connected Lie group \( (K, \cdot_K) \) and a simply transitive affine action
    \[
    \rho: (G, \circ_G) \to \mathrm{Aff}(K, \cdot_K).
    \]
\end{enumerate}
Moreover, whenever one (hence both) of the above conditions holds, there is a \emph{canonical} LSB structure $(K,\cdot_K,\circ_K)$ on $K$ and an isomorphism of LSBs 
\[
\Phi: (G, \cdot_G, \circ_G) \xrightarrow{\;\cong\;} (K, \cdot_K, \circ_K).
\]
As a consequence, the corresponding post-Lie algebras
$\bigl(\mathfrak{g}, [\cdot,\cdot]_{\cdot_G}, [\cdot,\cdot]_{\circ_G} \bigr)$ and 
$\bigl(\mathfrak{k}, [\cdot,\cdot]_{\cdot_K}, [\cdot,\cdot]_{\circ_K} \bigr)$
are isomorphic.
\end{theorem}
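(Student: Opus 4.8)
The plan is to reduce the whole statement to the LSB--regular-subgroup correspondence of Proposition \ref{proplambda}, using the $\lambda$-action as the bridge between the two group laws. First I would prove $(1)\Rightarrow(2)$. Given an LSB $(G,\cdot_G,\circ_G)$, I set $K=G$ and $\cdot_K=\cdot_G$, and take $\rho$ to be the map $a\mapsto(a,\lambda_a)$ furnished by Proposition \ref{proplambda}, a Lie group homomorphism $(G,\circ_G)\to\mathrm{Aff}(K,\cdot_K)$ onto the regular subgroup $H=\{(a,\lambda_a):a\in G\}$. The action of $\rho(a)$ on $k$ is $a\cdot_G\lambda_a(k)=a\circ_G k$, so solving $\rho(g)(k_1)=k_2$ amounts to solving $g\circ_G k_1=k_2$, which has the unique solution $g=k_2\circ_G\overline{k_1}$; hence $\rho$ is simply transitive.

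For $(2)\Rightarrow(1)$ the key observation is that the image $H:=\rho(G)$ is a regular Lie subgroup of $\mathrm{Aff}(K,\cdot_K)$. Simple transitivity forces $\rho$ to be injective, since if $\rho(g_1)$ and $\rho(g_2)$ agree at a single point they coincide as the unique mover of that point; and the regularity condition of Definition \ref{regsubgroup} --- that for each $a\in K$ there be a unique $(x,f)\in H$ with $x\cdot_K f(a)=e_K$ --- is precisely simple transitivity specialised to the target $e_K$. Proposition \ref{proplambda} then equips $K$ with a canonical operation $\circ_K$ making $(K,\cdot_K,\circ_K)$ an LSB with $(K,\circ_K)\cong H\cong(G,\circ_G)$.

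It remains to transport this structure back to $G$ and to produce $\Phi$. I would take $\Phi=\pi_1\circ\rho$, the orbit map $g\mapsto\rho(g)(e_K)$; writing $\rho(g)=(x,f)\in H$ one has $\pi_1(\rho(g))=x=\rho(g)(e_K)$, so $\Phi$ is the composite of $\rho$ with the Lie group isomorphism $\pi_1|_H\colon H\to(K,\circ_K)$ recorded in Proposition \ref{proplambda}, whence $\Phi\colon(G,\circ_G)\to(K,\circ_K)$ is a Lie group isomorphism. I then define $\cdot_G$ by pulling back $\cdot_K$ along $\Phi$, i.e. $a\cdot_G b:=\Phi^{-1}\bigl(\Phi(a)\cdot_K\Phi(b)\bigr)$; this makes $(G,\cdot_G)$ a Lie group and renders $\Phi$ simultaneously a $\cdot$- and a $\circ$-isomorphism, hence an isomorphism of LSBs, so that the axiom \eqref{eqLSB} transports from $(K,\cdot_K,\circ_K)$ to $(G,\cdot_G,\circ_G)$. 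The isomorphism of the associated post-Lie algebras is then immediate from functoriality of the differentiation functor $D$, via $D(\Phi)=\Phi_{*e}$.

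The hard part will be the smoothness bookkeeping: verifying that $\Phi$ is a genuine diffeomorphism rather than merely a smooth bijection. Here I would invoke automatic smoothness of continuous homomorphisms of Lie groups together with the fact, already established after Definition \ref{regsubgroup} and used in Proposition \ref{proplambda}, that $\pi_1|_H$ is a diffeomorphism; triviality of $\ker\Phi$ makes $\Phi_{*e}$ injective, and since $\rho$ is a bijective homomorphism onto $H$ the dimensions $\dim G$ and $\dim K=\dim H$ must coincide, upgrading $\Phi$ to a local and hence global diffeomorphism. This is the only point genuinely requiring care, amounting to the standard fact that the orbit map of a simply transitive smooth action is a diffeomorphism.
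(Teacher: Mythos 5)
Your proposal is correct and follows essentially the same route as the paper: $(1)\Rightarrow(2)$ via the graph $a\mapsto(a,\lambda_a)$ of the $\lambda$-action, and $(2)\Rightarrow(1)$ by observing that $\rho(G)$ is a regular Lie subgroup, invoking Proposition \ref{proplambda} to get the canonical LSB $(K,\cdot_K,\circ_K)$, and pulling back $\cdot_K$ along $\Phi=\pi_1\circ\rho$ (your $\Phi$ coincides with the paper's $\Psi\circ\rho$, since the isomorphism $\Psi$ in Proposition \ref{proplambda} is exactly $\pi_1|_H$). If anything, you are more careful than the paper on the one delicate point, namely upgrading the bijective homomorphism $\Phi$ to a Lie group isomorphism via the trivial-kernel/constant-rank argument, which the paper simply asserts.
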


\begin{proof}
\noindent
(1) \( \Rightarrow \) (2): Suppose \( (G, \cdot_G, \circ_G) \) is {an LSB}. Then the map
\[
\rho: (G, \circ_G) \to \mathrm{Aff}(G, \cdot_G), \quad a \mapsto (a, \lambda_a),
\]
is a Lie group homomorphism, where \( \lambda_a \in \mathrm{Aut}(G, \cdot_G) \) is defined as in \eqref{lambdaspec}. The induced action
\[
\rho(a)(b) := a \cdot_G \lambda_a(b)
\]
is simply transitive on \( (G, \cdot_G) \). We can then set \( (K, \cdot_K) := (G, \cdot_G) \), and \( \rho \) defines a simply transitive affine action.

\medskip

\noindent
(2) \( \Rightarrow \) (1): Assume there exists a simply transitive affine action
\[
\rho: (G, \circ_G) \to \mathrm{Aff}(K, \cdot_K).
\]
We first show that \( \rho(G) \subseteq \mathrm{Aff}(K, \cdot_K) \) is a regular subgroup (cf. Definition  \ref{regsubgroup}).  
Since the action of \( G \) on \( K \) via \( \rho \) is simply transitive, for any \( k \in K \), there exists a unique \( g \in G \) such that
\[
\rho(g)(k) = x \cdot f(k) = e_K,
\]
where $\rho (g)=(x, f)$ and $e_K$ denotes the identity element of $(K, \cdot_K)$.
 It follows that  \( \rho(G) \) is a regular subgroup of \( \mathrm{Aff}(K, \cdot_K) \).
 {Since $\rho$ is an injective homomorphism of Lie groups it follows by \cite[Prop. 7.17]{Lee-ISM} 
 and by Remark \ref{rmkclosed} that $\rho (G)$
 is a closed subgroup of $\mathrm{Aff}(K, \cdot_K)$.}

Now, by Proposition~\ref{proplambda}, every regular Lie  subgroup of \( \mathrm{Aff}(K, \cdot_K) \) corresponds to {an LSB} structure \( (K, \cdot_K, \circ_K) \), and the group \( \rho(G) \) is isomorphic to \( (K, \circ_K) \).
Let \( \Psi: \rho(G) \to (K, \circ_K) \) be such an isomorphism. Then the composition
\[
\Phi := \Psi\circ  \rho: (G, \circ_G) \to (K, \circ_K)
\]
is a Lie group isomorphism. We define a new multiplication on \( G \) by
\[
a \cdot_G b := \Phi^{-1}(\Phi(a) \cdot_K \Phi(b)),
\]
which turns \( G \) into a Lie group and makes \( (G, \cdot_G, \circ_G) \) {an LSB} isomorphic to \( (K, \cdot_K, \circ_K) \). The result follows.
\end{proof}
\medskip
\noindent

Theorem \ref{propLSBaff} shows that studying LSB structures on Lie groups is equivalent to studying simply transitive affine actions. Consequently, the structural properties of LSBs can be translated into geometric properties of affine actions, and viceversa.

The following corollary summarizes several consequences of our results in the setting of affine actions. It provides a reformulation of Theorem~\ref{mainteor} and statement \textbf{(F1)} in Theorem~\ref{mainteor2} from the perspective of affine geometry.

\begin{cor}
Let $\rho: (G,\circ_G) \;\longrightarrow\; \mathrm{Aff}(K,\cdot_K)$
be a simply transitive affine action of a connected Lie group \((G,\circ)\) on a Lie group \((K,\cdot)\). Then:

\begin{itemize}
  \item[\textbf{(S1')}] If \((K,\cdot_K)\) is linear and simply-connected, then \((G,\circ_G)\) is linear.
  \item[\textbf{(S2')}] If \((K,\cdot_K)\) is solvable, then \((G,\circ_G)\) is solvable.
  \item[\textbf{(R1')}] If \((G,\circ_G)\) is nilpotent, then \((K,\cdot_K)\) is solvable.
  \item[\textbf{(R2')}] If \((G,\circ_G)\) is semisimple, then \( \rho \) induces a group isomorphism:
  $(G,\circ_G)\;\cong\;(K,\cdot_K).$

  Moreover, 
  \item[\textbf{(F1')}] For every connected and simply-connected non-linear Lie group \((K,\cdot_K)\), there exists a real connected  linear {Lie group}   \((G,\circ)\) and a  simply transitive affine action
 $\rho: (G,\circ_G) \longrightarrow \mathrm{Aff}(K,\cdot_K)$.
 \end{itemize}
\end{cor}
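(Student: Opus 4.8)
The plan is to read the corollary as a pure translation of Theorem~\ref{mainteor} and of \textbf{(F1)} through the dictionary supplied by Theorem~\ref{propLSBaff}, so that no new analysis is required. First I would apply the implication (2)~$\Rightarrow$~(1) of Theorem~\ref{propLSBaff} to the given simply transitive affine action $\rho\colon(G,\circ_G)\to\mathrm{Aff}(K,\cdot_K)$. This produces a canonical LSB structure $(K,\cdot_K,\circ_K)$ on $K$, a Lie group structure $\cdot_G$ on $G$ for which $(G,\cdot_G,\circ_G)$ is an LSB, and an isomorphism of LSBs $\Phi\colon(G,\cdot_G,\circ_G)\xrightarrow{\;\cong\;}(K,\cdot_K,\circ_K)$. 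In particular $\Phi$ is simultaneously a Lie group isomorphism $(G,\cdot_G)\cong(K,\cdot_K)$ and $(G,\circ_G)\cong(K,\circ_K)$, and these are the only identifications I will use.

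For \textbf{(S1')}, \textbf{(S2')} and \textbf{(R1')} I would simply transport the hypotheses across $\Phi$ and invoke the matching item of Theorem~\ref{mainteor}, applied to the LSB $(G,\cdot_G,\circ_G)$ (equivalently, to $(K,\cdot_K,\circ_K)$). Thus if $(K,\cdot_K)$ is linear and simply connected, then so is $(G,\cdot_G)$, and \textbf{(S1)} forces $(G,\circ_G)$ to be linear; if $(K,\cdot_K)$ is solvable, then so is $(G,\cdot_G)$, and \textbf{(S2)} yields solvability of $(G,\circ_G)$; and if $(G,\circ_G)$ is nilpotent, then \textbf{(R1)} makes $(G,\cdot_G)$ solvable, whence $(K,\cdot_K)\cong(G,\cdot_G)$ is solvable. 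Each of these is a one-line consequence once the isomorphism $\Phi$ is in place.

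For \textbf{(R2')} I would note that semisimplicity of $(G,\circ_G)$ passes through $\Phi$ to $(K,\circ_K)$; applying \textbf{(R2)} to the LSB $(K,\cdot_K,\circ_K)$ gives $(K,\cdot_K)\cong(K,\circ_K)$, and composing this with $\Phi$ produces the desired isomorphism $(G,\circ_G)\cong(K,\cdot_K)$. The only point needing a little care is the claim that this isomorphism is \emph{induced by} $\rho$: here I would trace the construction inside Theorem~\ref{propLSBaff}, where the identification $(G,\circ_G)\cong(K,\circ_K)$ is exactly the composite $\Psi\circ\rho$, so that the final isomorphism is assembled from $\rho$ together with the canonical identifications coming from Proposition~\ref{proplambda} and \textbf{(R2)}.

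Finally, \textbf{(F1')} runs the dictionary in the opposite direction. Given a connected, simply connected, non-linear $(K,\cdot_K)$, I would apply \textbf{(F1)} with $(G,\cdot):=(K,\cdot_K)$ to obtain an LSB $(K,\cdot_K,\circ_K)$ whose second group $(K,\circ_K)$ is linear. Setting $(G,\circ_G):=(K,\circ_K)$ and invoking the implication (1)~$\Rightarrow$~(2) of Theorem~\ref{propLSBaff}, the map $a\mapsto(a,\lambda_a)$ is a simply transitive affine action $\rho\colon(G,\circ_G)\to\mathrm{Aff}(K,\cdot_K)$ with $(G,\circ_G)$ connected and linear, as required. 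I do not expect any genuine obstacle in this corollary: the substance is already packaged in Theorem~\ref{propLSBaff}, Theorem~\ref{mainteor} and \textbf{(F1)}, and the only things to watch are the bookkeeping of which item of Theorem~\ref{mainteor} is applied to which group, and the faithful tracking of $\rho$ in \textbf{(R2')}.
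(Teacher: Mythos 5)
Your proposal is correct and matches the paper's intent exactly: the paper offers no separate proof of this corollary, presenting it as an immediate reformulation of Theorem~\ref{mainteor} and statement \textbf{(F1)} of Theorem~\ref{mainteor2} through the LSB/affine-action dictionary of Theorem~\ref{propLSBaff}, which is precisely the translation you carry out (including the correct direction of the dictionary in each item and the tracing of $\rho$ through $\Psi\circ\rho$ in \textbf{(R2')}).
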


\medskip

\noindent
Notice that the proof of \textbf{(F2)} in Theorem~\ref{mainteor2} follows by combining Theorem~\ref{propLSBaff} with Dekimpe’s Theorem~\ref{dekimpesolvable}.

Moreover, the result by Benoist mentioned above can be reformulated as the existence of connected and simply connected nilpotent Lie groups of dimension~$11$, denoted by \( (G, \circ) \), such that for any LSB structure \( (G, \cdot, \circ) \), the Lie group \( (G, \cdot) \) is non-abelian.

\begin{remark}\rm
It is also worth noting that a similar phenomenon occurs in the finite setting. Indeed, 
{Bachiller~\cite{Bachiller2016Counterexample}} proved the existence of a finite solvable group \( (G, \circ) \) such that for every left skew brace \( (G, \cdot, \circ) \), the multiplicative group \( (G, \cdot) \) is non-abelian.
\end{remark}

The key idea underlying Benoist's construction relies on the final part of Theorem~\ref{mainteor3}, combined with the observation that certain pairs of Lie algebras, namely \( (\mathbb{R}^{11}, [\cdot,\cdot]_{\circ}) \) and the abelian Lie algebra \( (\mathbb{R}^{11}, [\cdot,\cdot]_{\cdot} = 0) \), cannot be equipped with a bilinear operation \( \triangleright \) satisfying  the pre-Lie identity~\eqref{idpre}. 
\medskip

\noindent
Further results concerning the transition from affine structures to LSB structures will be employed in the proof of Theorem~\ref{mainteor3}.

\section{Proofs of Theorems \ref{mainteor}-\ref{mainteor3} and Corollary \ref{maincor}}\label{proofs}
The following lemmata will be used in the proofs of properties (S1) and (S2) in Theorem~\ref{mainteor}, respectively.
The first lemma establishes a structural property that, although implicitly used in the literature, is often not stated explicitly. For completeness, we include a full proof here.

\begin{lemma} \label{lemmalinear}
Let $(G, \cdot)$ be a linear connected and simply connected real Lie group. 
Then the identity component \( \mathrm{Aff}_{0}(G, \cdot) \) of the group of affine transformations \( \mathrm{Aff}(G, \cdot) \) is also linear.
\end{lemma}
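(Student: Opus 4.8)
The plan is to reduce everything to the linear algebraic group $\mathrm{Aut}(\mathfrak g)$ and then assemble a faithful representation of the resulting semidirect product by hand. First I would exploit simple connectivity: since $(G,\cdot)$ is connected and simply connected, differentiation at $e$ yields an isomorphism of Lie groups $\mathrm{Aut}(G,\cdot)\xrightarrow{\ \sim\ }\mathrm{Aut}(\mathfrak g)$, where $\mathfrak g=\mathrm{Lie}(G,\cdot)$, because every Lie-algebra automorphism integrates uniquely to a group automorphism (here the simple connectivity is indispensable). Now $\mathrm{Aut}(\mathfrak g)$ is cut out inside $\mathrm{GL}(\mathfrak g)$ by the polynomial identities expressing preservation of the bracket, so it is a real algebraic group and in particular linear; hence so is its identity component $A:=\mathrm{Aut}(G,\cdot)_0\cong\mathrm{Aut}(\mathfrak g)_0$. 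Since $G$ is connected we have $\mathrm{Aff}_0(G,\cdot)=G\rtimes A$, and this is the group I must show is linear.

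Next I would construct the representation in two stages, reducing faithfulness to a small central piece. Write $H:=\mathrm{Aff}_0(G,\cdot)=G\rtimes A$ and $\mathfrak h:=\mathrm{Lie}(H)=\mathfrak g\rtimes\mathrm{Der}(\mathfrak g)$. Two finite-dimensional representations are available for free: the adjoint representation $\Ad\colon H\to\mathrm{GL}(\mathfrak h)$, whose kernel is the centre $Z(H)$ (as $H$ is connected), and the projection $\beta\colon H\to A\hookrightarrow\mathrm{GL}(\mathfrak g)$, $(g,a)\mapsto a$, whose kernel is $G$. A direct computation of $Z(H)$ shows that $(g,a)\in Z(H)$ forces $a$ to centralize $A$ and $g$ to lie in $C:=\{\,g\in Z(G):\Phi_a(g)=g\ \text{for all }a\in A\,\}$, the subgroup of central elements of $G$ fixed by every automorphism in $A$ (here $\Phi_a\in\mathrm{Aut}(G,\cdot)$ denotes the automorphism corresponding to $a$). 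Consequently $\ker(\Ad\oplus\beta)=Z(H)\cap G=C$, so $\Ad\oplus\beta$ is faithful modulo the central subgroup $C$, and it remains only to separate the points of $C$ by a finite-dimensional representation of $H$.

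Finally I would detect $C$ using the linearity of $(G,\cdot)$. The crucial feature is that, by its very definition, $A$ acts trivially on $C$, so $C$ is central in $H$ and $C\times A$ is an internal direct product inside $H$. Choosing a faithful representation $\rho\colon G\to\mathrm{GL}(V)$, its restriction $\rho|_{C}$ is faithful, and on the irreducible constituents of $V$ the central subgroup $C$ acts by scalars, producing finitely many characters of $C$ that separate its points. I expect the technical heart of the argument to be promoting these to finite-dimensional representations of the whole of $H$ that remain faithful on $C$: a faithful representation of $G$ need not be compatible with the $A$-action, since the automorphisms in $A$ are generally not realized by conjugations inside $\mathrm{GL}(V)$, so $\rho$ does not extend to $H$ directly. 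The leverage that should resolve this is that the hypothesis of linearity of $G$ forces its semisimple Levi factor to have finite, separable centre, while the algebraicity of $A$ does the same for $A$; thus neither simple factor contributes a non-linearizable central element, and $C$ carries no obstruction of $\widetilde{\mathrm{SL}_2}$-type. Making this separation explicit—extending the separating characters of $C$ across the semidirect product, for which one may invoke the algebraic structure of $A$ and the results on automorphism groups of Lie groups in \cite{Hochschild1952aut, Dani1992}—and summing the resulting representations with $\Ad\oplus\beta$ yields a faithful finite-dimensional representation of $H=\mathrm{Aff}_0(G,\cdot)$, establishing linearity.
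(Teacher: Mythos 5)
Your opening reductions are sound and partly parallel the paper's own observations: simple connectivity gives $\mathrm{Aut}(G,\cdot)\cong\mathrm{Aut}(\mathfrak g)$, so $A:=\mathrm{Aut}_0(G,\cdot)$ is linear, and your computation $\ker(\Ad\oplus\beta)=Z(H)\cap G=C$ for $H=\mathrm{Aff}_0(G,\cdot)=G\rtimes A$ is correct; hence linearity of $H$ would indeed follow from one finite-dimensional representation of $H$ that is faithful on $C$. But that last step is the entire content of the lemma, and your proposal does not prove it. Two concrete problems. First, your mechanism for separating the points of $C$ fails: central characters coming from the irreducible constituents of a faithful representation $\rho$ of $G$ need not separate points of $Z(G)$, because the faithfulness of $\rho$ can live entirely in its non-semisimple (unipotent) part. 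The Heisenberg group is the standard counterexample: it is linear, yet every finite-dimensional irreducible representation kills its center (by Lie's theorem all such irreducibles are one-dimensional), so ``finitely many characters of $C$ that separate its points'' cannot in general be extracted this way. Second, and more fundamentally, the obstruction you must rule out is not whether $C$ is abstractly linearizable (every abelian Lie group is), but whether a faithful character of the central subgroup $C$ extends to a finite-dimensional representation of all of $H$; this is exactly the $\widetilde{\mathrm{SL}_2(\mathbb{R})}$-type obstruction you dismiss \Dash there the center is $\mathbb{Z}$, perfectly linear in itself, yet no faithful character of it extends. Your stated reasons (linearity of $G$ forces its Levi factor to have finite center; $A$ is algebraic) do not address $H$: the Levi factor of $H$ mixes the Levi factor of $G$ with the semisimple part of $A$, and nothing in your argument controls how $C$ sits relative to it. ``Extending the separating characters of $C$ across the semidirect product, invoking the algebraic structure of $A$'' is precisely the unproved claim.

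This missing step is what the paper's proof outsources to Wu--Lee \cite{Wu1986}: by the Levi--Malcev decomposition of the simply connected group $G$, the radical $R$ is a nucleus invariant under $\mathrm{Aut}_0(G,\cdot)$, so \cite[Lemma 2]{Wu1986} produces a faithful representation of $G$ that extends (possibly non-faithfully) to $G\rtimes\mathrm{Aut}_0(G,\cdot)$, and \cite[Lemma 1]{Wu1986}, an extension lemma for the sequence $1\to G\to H\to A\to 1$ with $A$ linear, then yields linearity of $H$. To complete your route you would essentially have to reprove those lemmas, i.e.\ show that the faithful representation of $G$ can be chosen compatibly with the $\mathrm{Aut}_0$-action; this is where the structure theory (invariance of the radical/nucleus) genuinely enters, and it is absent from your sketch. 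As it stands, your argument establishes a correct reduction but leaves the decisive construction as an assertion.
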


\begin{proof}
By definition, the identity component of the group of affine transformations is given by
\[
\mathrm{Aff}_{0}(G, \cdot) = (G, \cdot) \rtimes \mathrm{Aut}_{0}(G, \cdot),
\]
where \( \mathrm{Aut}_{0}(G, \cdot) \) denotes the connected component of the identity in the group of Lie group automorphisms of \( G \).
Since \( G \) is simply-connected, by {\cite[Thm.~3.18.13]{Varadarajan}} it admits a Levi-Malcev decomposition
$(G, \cdot) = R \rtimes S,$
where \( R \) is the radical of \( G \) and \( S \) is a semisimple subgroup. Thus, in the terminology in \cite{Wu1986}, the subgroup \( R \) is a {\em nucleus}, i.e. a closed  connected and simply-connected normal, solvable Lie subgroup of $G$
such that $G/R$ is reductive.
 Since $R$ is   invariant under the action of \( \mathrm{Aut}_{0}(G) \), it follows  by  \cite[Lemma 2]{Wu1986} that 
 there exists a  faithful representation of \( G \)  which can be extended to a (possibly non-faithful) representation of the semidirect product \( (G, \cdot) \rtimes \mathrm{Aut}_{0}(G, \cdot) \).
Now consider the short exact sequence
\[
1 \rightarrow (G, \cdot) \rightarrow (G, \cdot) \rtimes \mathrm{Aut}_{0}(G) \rightarrow \mathrm{Aut}_{0}(G) \rightarrow 1.
\]
Since \( (G, \cdot) \) admits a faithful linear representation that extends to the semidirect product, and \( \mathrm{Aut}_{0}(G, \cdot) \) is linear, being  a Lie subgroup of \( \mathrm{GL}(\mathfrak{g}) \), where \( \mathfrak{g} = \mathrm{Lie}(G, \cdot) \), we may apply \cite[Lemma 1]{Wu1986} to conclude that \( \mathrm{Aff}_{0}(G, \cdot) \) is linear.
\end{proof}


Our second lemma shows how {an LSB} structure can be lifted to the universal covering of a connected LSB.

\begin{lemma} \label{Universalcover}
Let $(G, \cdot, \circ)$ be a connected LSB with identity element $e \in G$. Let $\widetilde{G}$ be the universal covering space of $G$, and let $\pi: \widetilde{G} \rightarrow G$ denote the covering map. Then, for any $\tilde{e} \in \pi^{-1}(e)$, there exists {an LSB} structure $(\widetilde{G}, \tilde{\cdot}, \tilde{\circ})$ with identity $\tilde{e}$ such that $\pi$ is {an LSB} homomorphism from $(\widetilde{G}, \tilde{\cdot}, \tilde{\circ})$ to $(G, \cdot, \circ)$.
\end{lemma}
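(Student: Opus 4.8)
The plan is to build the two lifted group laws on the common universal cover and then transport the skew-brace identity upstairs by a unique-lifting argument. The starting observation is that the universal cover $\pi\colon\widetilde G\to G$ and the chosen base point $\tilde e\in\pi^{-1}(e)$ depend only on the underlying smooth manifold $G$, not on which of the two group laws we consider. I would first invoke the classical fact that a connected Lie group lifts uniquely to its universal cover: since $\widetilde G\times\widetilde G$ is simply connected, the smooth map $(g,h)\mapsto \pi(g)\cdot\pi(h)$ lifts to a unique smooth map $\tilde\cdot\colon\widetilde G\times\widetilde G\to\widetilde G$ sending $(\tilde e,\tilde e)$ to $\tilde e$, and one checks in the standard way that $\tilde\cdot$ is a Lie group law on $\widetilde G$ with identity $\tilde e$ for which $\pi$ is a homomorphism. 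Applying this verbatim to $\circ$ produces a second Lie group law $\tilde\circ$ on the \emph{same} manifold $\widetilde G$, with the \emph{same} identity $\tilde e$, for which $\pi$ is again a homomorphism. Smoothness of all the lifted operations is automatic because $\pi$ is a local diffeomorphism, so lifts of smooth maps along $\pi$ are smooth.

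Next I would record the consequences that make the verification go through: being a group homomorphism, $\pi$ intertwines inversion, so $\pi(a^{-1})=\pi(a)^{-1}$ where $a^{-1}$ denotes the $\tilde\cdot$-inverse and $\pi(a)^{-1}$ the $\cdot$-inverse. It then remains only to prove the skew-brace axiom
\[
a\,\tilde\circ\,(b\,\tilde\cdot\,c)=(a\,\tilde\circ\,b)\,\tilde\cdot\,a^{-1}\,\tilde\cdot\,(a\,\tilde\circ\,c),\qquad a,b,c\in\widetilde G,
\]
with $a^{-1}$ the $\tilde\cdot$-inverse. I would introduce the two smooth maps $F,H\colon\widetilde G\times\widetilde G\times\widetilde G\to\widetilde G$ given by the left- and right-hand sides, and treat the desired identity as the assertion $F=H$.

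The decisive step is to compare $F$ and $H$ after pushing down by $\pi$. Using that $\pi$ is a homomorphism for both $\tilde\cdot$ and $\tilde\circ$ and intertwines inversion, one computes $\pi\circ F=\pi\circ H$ directly from the skew-brace identity already known to hold in $(G,\cdot,\circ)$: both compositions equal the single map $(a,b,c)\mapsto \pi(a)\circ(\pi(b)\cdot\pi(c))$. Thus $F$ and $H$ are two continuous lifts, along the covering map $\pi$, of one and the same map $\widetilde G^{\,3}\to G$. Since $\widetilde G^{\,3}$ is connected and the two lifts agree at $(\tilde e,\tilde e,\tilde e)$ — both sides evaluate to $\tilde e$ — the unique-lifting property of covering maps forces $F=H$ everywhere. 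This establishes the axiom and shows $(\widetilde G,\tilde\cdot,\tilde\circ)$ is an LSB with identity $\tilde e$, with $\pi$ an LSB homomorphism by construction.

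The only genuinely delicate point is this last verification of the compatibility identity: a brute-force pointwise manipulation in $\widetilde G$ would be awkward because elements of the cover have no direct algebraic description, so the clean route is precisely the unique-lifting argument, which reduces the whole identity to its already-established image in $G$ together with a single check at the base point. Everything else — existence and smoothness of the lifted laws, coincidence of the two covers and identities — is standard Lie-theoretic bookkeeping.
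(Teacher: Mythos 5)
Your proof is correct and follows essentially the same route as the paper: lift both group laws to the universal cover by standard Lie theory, then verify the skew-brace identity via the unique-lifting property of covering maps, using that $\widetilde G^{\,3}$ is connected and that both sides agree (and equal $\tilde e$) at $(\tilde e,\tilde e,\tilde e)$. The only cosmetic difference is that the paper compares the single map $(\text{LHS})\,\tilde\cdot\,(\text{RHS})^{-1}$ with the constant map at $\tilde e$, whereas you compare the two sides directly as lifts of one and the same map into $G$.
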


\begin{proof}
By standard results in Lie group theory (see, e.g., \cite[Section 3.24]{Warner}), there exist two Lie group structures on $\widetilde{G}$, denoted by $(\widetilde{G}, \cdot)$ and $(\widetilde{G}, \circ )$, both having $\tilde{e}$ as identity element, such that $\pi$ is a Lie group homomorphism from $(\widetilde{G}, \cdot )$ to $(G, \cdot)$ and from $(\widetilde{G}, \circ )$ to $(G, \circ)$ (here with a slight abuse of notation we are denoting the operations on $\widetilde{G}$ with the same symbol $\cdot$ and $\circ$ used for $G$).

To show that $(\tilde G, \cdot , \circ)$ is {an LSB} and that $\pi$ is {an LSB} homomorphism, we must verify that the identity
\eqref{eqLSB} is satisfied, namely
\begin{equation}\label{tildeLSB}
[ \tilde{a} \circ (\tilde{b} \cdot \tilde{c}) ] \cdot  [(\tilde{a} \circ \tilde{b}) \cdot  \tilde{a}^{-1}  \cdot  (\tilde{a} \circ  \tilde{c}) ]^{-1}=\tilde e
\end{equation}holds for all $\tilde{a}, \tilde{b}, \tilde{c} \in \widetilde{G}$.

Define the smooth map
\[
\tilde{f} : \widetilde{G} \times \widetilde{G} \times \widetilde{G} \rightarrow \widetilde{G}, \quad
(\tilde{a}, \tilde{b}, \tilde{c}) \mapsto [ \tilde{a} \circ (\tilde{b} \cdot \tilde{c}) ] \cdot  [(\tilde{a} \circ \tilde{b}) \cdot  \tilde{a}^{-1}  \cdot  (\tilde{a} \circ  \tilde{c}) ]^{-1},
\]
and consider the constant map
\[
c_{\tilde{e}} : \widetilde{G} \times \widetilde{G} \times \widetilde{G} \rightarrow \widetilde{G}, \quad
(\tilde{a}, \tilde{b}, \tilde{c}) \mapsto \tilde{e}.
\]

We aim to prove that $\tilde{f} = c_{\tilde{e}}$.
In order to show this,
notice first  that  $\tilde{f}$ and ${c}_{\tilde e}$ are smooth lifts of the constant map 
$$c_e:G\times G\times G\rightarrow G, \ \ (a, b, c)\mapsto c_e(a, b, c)=e,$$
i.e. $\pi\tilde f=\pi c_{\tilde e}=e$.
Indeed, for $c_{\tilde e}$ {it} is obvious while for $\tilde f$:
$$\pi\tilde f(\tilde a, \tilde b, \tilde c)=\pi\left([ \tilde{a} \circ (\tilde{b} \cdot \tilde{c}) ] \cdot  [(\tilde{a} \circ \tilde{b}) \cdot  \tilde{a}^{-1}  \cdot  (\tilde{a} \circ  \tilde{c}) ]^{-1}\right)=[ a \circ (b \cdot c) ] \cdot  [(a \circ b) \cdot  a^{-1}  \cdot  (a \circ  c) ]^{-1}=e,$$
with $a=\pi(\tilde a), b=\pi(\tilde b)$ and $c=\pi(\tilde c)$, where the second equality follows 
by the fact that $\pi$ is an homomorphism
from $(\widetilde{G}, \cdot )$ to $(G, \cdot)$ and from $(\widetilde{G}, \circ )$ to $(G, \circ)$ 
and the third equality follows by the fact that $(G, \cdot, \circ)$ is {an LSB} and so \eqref{eqLSB} holds true.
Moreover, $\tilde f$ and $c_{\tilde e}$ agree at the point $(\tilde{e}, \tilde{e}, \tilde{e})$, i.e. 
{$c_{\tilde e}(\tilde e, \tilde e, \tilde e)=\tilde f (\tilde e, \tilde e, \tilde e)=\tilde e$}.
By uniqueness of  lifts on covering spaces (since the space involved are connected), it follows that $\tilde f = c_{\tilde e}$, and we are done.  
\end{proof}

\begin{remark}\rm
Thanks to the correspondence between LSBs and simply transitive affine actions established in Theorem~\ref{propLSBaff}, Lemma~\ref{Universalcover} admits the following geometric counterpart:
\emph{Let
$\rho : (G,\circ)\;\longrightarrow\;\mathrm{Aff}(K,\cdot)$
be a simply transitive affine action of a connected Lie group $(G,\circ)$ on another connected Lie group $(K,\cdot)$.  
Then the universal coverings $\widetilde{G}$ and $\widetilde{K}$ carry a lifted action
$\widetilde{\rho} : (\widetilde{G},\circ)\;\longrightarrow\;\mathrm{Aff}(\widetilde{K},\cdot),$
which is again simply transitive.}
\end{remark}

\begin{proof}[Proof of Theorem \ref{mainteor}]
\mbox{}

\noindent
\textbf{(S1)} 
Let $(G, \cdot, \circ)$ be {an LSB}, with $(G, \cdot )$ connected simply-connected and linear. By Proposition~\ref{proplambda}, there exists a regular subgroup \( H \leq \mathrm{Aff}(G, \cdot) \) such that \( H \cong (G, \circ) \).
Since \( G \) is connected, it follows that \( H \leq \mathrm{Aff}_0(G, \cdot) \). 
As \( \mathrm{Aff}_0(G, \cdot) \) is linear (by Lemma~\ref{lemmalinear}),  \( H \) is also linear. Therefore, \( (G, \circ) \) is linear as well.

\vspace{0.5em}
\noindent
\textbf{(S2)} Let \( (G, \cdot, \circ) \) be a connected LSB such that \( (G, \cdot) \) is solvable. We aim to prove that \( (G, \circ) \) is also solvable.
By Lemma~\ref{Universalcover}, we may assume without loss of generality that \( G \) is simply connected. That is, \( (G, \cdot) \) is a connected, simply connected, solvable Lie group.
By standard results, such a Lie group is linear (see \cite[Th. 16.2.7]{Hilgert1952book}) and diffeomorphic to \( \mathbb{R}^n \) (see, e.g., \cite[Th. $2^a$]{Chevalley1951}). Hence, the Lie group \( (G, \circ) \), which is defined on the same underlying manifold \( G \), is also diffeomorphic to \( \mathbb{R}^n \).

By the Levi–Malcev decomposition, we have
$(G, \circ) \simeq R \rtimes S,$
where \( R \) is a connected, simply connected solvable Lie group, and \( S \) is a connected, simply connected semisimple Lie group.

Since \( (G, \circ) \) is diffeomorphic to \( \mathbb{R}^n \), it is in particular contractible. This implies that the semisimple part \( S \) must also be contractible, and hence diffeomorphic to some Euclidean space. One can verify  (see, e.g., \cite[Ch. VI]{Knapp2013lie}) that the \emph{only} connected, simply connected, non-compact, simple real Lie group diffeomorphic to an Euclidean space is the universal covering group \( \widetilde{SL_2(\mathbb{R})} \) of \( SL_2(\mathbb{R}) \) the special linear group of order $2$.
Since any semisimple Lie group is a finite direct product of simple Lie groups, we conclude that
$S \simeq \widetilde{SL_2(\mathbb{R})}^k$
for some \( k \geq 0 \). Therefore, 
$(G, \circ) \simeq R \rtimes \widetilde{SL_2(\mathbb{R})}^k,$
where \( R \) is a connected, simply connected solvable Lie group.
By \textbf{(S1)}, the Lie group \( (G, \circ) \) is linear. However, it is well known that \( \widetilde{SL_2(\mathbb{R})} \) is not linear. This forces \( k = 0 \). Hence,
$(G, \circ) \simeq R,$
and since \( R \) is solvable, we conclude that \( (G, \circ) \) is solvable.

\vspace{0.5em}
\noindent
\textbf{(R1)} 
Let \( (G, \cdot, \circ) \) be a connected LSB such that \( (G, \circ) \) is nilpotent 
and let \( (\mathfrak{g}, [\cdot,\cdot]_{\cdot}, [\cdot,\cdot]_{\circ})\) be the corresponding PLA . Then the Lie algebra \( (\mathfrak{g}, [\cdot, \cdot]_{\circ}) \) is nilpotent as well. By \cite[Prop.~3.2]{Burde2013postliealgebra}, this implies that the Lie algebra \( (\mathfrak{g}, [\cdot, \cdot]_{\cdot}) \) is solvable. Hence, \( (G, \cdot) \) is solvable, as desired.

\vspace{0.5em}
\noindent
\textbf{(R2)} 
Let \( (G, \cdot, \circ) \) be a connected Lie skew brace such that \( (G, \circ) \) is semisimple 
and let \( (\mathfrak{g}, [\cdot,\cdot]_{\cdot}, [\cdot,\cdot]_{\circ})\) be the corresponding PLA. Then the Lie algebra \( (\mathfrak{g}, [\cdot, \cdot]_{\circ}) \) is semisimple as well.
By adapting the proof of \cite[Th.~3.3]{Burde2022rigiditypostliealgebra} to the real setting,\footnote{Although \cite[Th.~3.3]{Burde2022rigiditypostliealgebra} is stated over \(\mathbb{C}\), the same rigidity argument can be transferred to the real case by passing to the complexifications and then descending to the real forms.} we conclude that \( (\mathfrak{g},[\cdot,\cdot]_{\cdot}) \cong (\mathfrak{g},[\cdot,\cdot]_{\circ}) \) as Lie algebras. Therefore, \( (G,\cdot) \) and \( (G,\circ) \) are locally isomorphic Lie groups.
\end{proof}

\begin{remark}\rm\label{rmk1mainteor}
Point \textbf{(S1)} of Theorem~\ref{mainteor} \emph{fails} once the assumption that $G$ is simply connected is dropped. An explicit counterexample can be constructed as follows.
Let \( H = \mathbb{R}^3 \) equipped with the Heisenberg group law:
\begin{equation}\label{heislaw}
(x,y,z)\cdot(x',y',z') =
\bigl(x + x',\, y + y',\, z + z' + xy' \bigr).
\end{equation}

Consider the discrete central subgroup
\[
N = \{(0,0, k): k \in \mathbb{Z} \} \subset Z(H),
\]
and define \( G := H / N \), denoting the group operation on \( G \) by \( \circ \). We represent cosets in \( G \) as \([x, y, \theta]\), with \(\theta \in \mathbb{S}^1\).
As shown in \cite[Section 4.8]{Hall2003lie}, the group \( G \) is not linear. Now consider the following subgroups of \( G \):
\[
G_1 = \bigl\{ [x, 0, \theta] : x \in \mathbb{R},\, \theta \in \mathbb{S}^1 \bigr\}
\cong \mathbb{R} \times \mathbb{S}^1, \quad
G_2 = \bigl\{ [0, y, 0] : y \in \mathbb{R} \bigr\} \cong \mathbb{R}.
\]

It is readily verified that \( G_1 \) is a closed normal subgroup of \( G \), \( G_2 \) is a closed subgroup, and \( G_1 \cap G_2 = \{e\} \). Therefore, \( G \) is the semidirect product \( G = G_1 \rtimes G_2 \). 
Now endow the same underlying set
$G = (\mathbb{R} \times \mathbb{S}^1) \times \mathbb{R}$
with the direct product operation $\cdot$. According to Example~\ref{exLSBrevzappa}, the pair of operations \((\cdot, \circ)\) defines a connected but non-simply-connected LSB, where the group \((G, \cdot)\) is linear, although \((G, \circ )\) is not.
This shows that the assumption of simple-connectedness is crucial for the validity of point \textbf{(S1)}.
\end{remark}

\medskip

\begin{proof}[Proof of Corollary~\ref{maincor}]
Let \(G_{0}\subseteq G\) be the (closed) connected component of the identity.  
Because the map \(\lambda_{a}\) defined in~\eqref{lambdaspec} is continuous, we have
\(\lambda_{a}(G_{0})=G_{0}\) for every \(a\in G\). Moreover,  \(G_{0}\) is a normal
subgroup of both \((G,\cdot)\) and \((G,\circ)\).

Consequently, we obtain two compact LSBs:
\[
  (G_{0},\cdot,\circ)
  \quad\text{and}\quad
  (\overline{G},\cdot,\circ),\qquad
  \overline{G}=G/G_{0},
\]
where the quotient operations are
\[
  (aG_{0})\cdot(bG_{0})=(a\cdot b)G_{0},
  \qquad
  (aG_{0})\circ(bG_{0})=(a\circ b)G_{0}.
\]

Because \(G_{0}\) is connected, condition \textbf{(S2)} of
Theorem~\ref{mainteor} tells us that the  group \((G_{0},\circ)\) is
solvable.
Next, \((G,\cdot)\) is compact and nilpotent, and \(G_{0}\) is an open
subgroup, so \(G_{0}\) has finite index in \((G,\cdot)\).  Therefore
\(|\overline{G}|<\infty\), and \((\overline{G},\cdot,\circ)\) is a finite left
skew brace.  Nilpotency is preserved by quotients, hence
\((\overline{G},\cdot)\) is finite and nilpotent.  By
\cite[Cor.~2.23]{Smoktunowicz2018} the multiplicative group
\((\overline{G},\circ)\) is solvable.
Since the class of solvable groups is closed under extensions, the 
group \((G,\circ)\) is also solvable.  This completes the proof
\footnote{Motivated by the discrete left skew brace setting
(cf.\ \cite{KonSmokVen2021}), one may call a subset
\(I\subseteq(G,\cdot,\circ)\) an \emph{ideal} if it is a closed Lie subgroup
satisfying
\begin{enumerate}\renewcommand{\labelenumi}{(\roman{enumi})}
  \item \(I\trianglelefteq(G,\cdot)\),
  \item \(I\trianglelefteq(G,\circ)\),
  \item \(\lambda_{a}(I)=I\) for every \(a\in G\).
\end{enumerate}
The argument above shows that \(G_{0}\) is indeed such an ideal.  The
closedness of \(I\) is precisely what guarantees that the quotient
\(G/G_{0}\) carries a natural manifold structure.}.
\end{proof}

\begin{remark}\label{rmkconj}
It is worth pointing out that, if Smoktunowicz--Vandramin's conjecture stated in the Introduction is true, the nilpotency assumption in Corollary~\ref{maincor} can be relaxed to mere solvability.
In any case, repeating the argument above and exploiting the fact that, for every \emph{finite} left skew brace \((B,\cdot,\circ)\), the solvability of \((B,\cdot)\) together with \(|B|\) not divisible by~\(3\) forces \((B,\circ)\) to be solvable 
\cite[Cor. 2.3]{Gorshkov2021}, we obtain the following consequence: {\em 
let \((G,\cdot,\circ)\) be a  LSB such that \((G,\cdot)\) is solvable and the quotient \(G/G_0\) is finite of order not divisible by~\(3\).
Then the group \((G,\circ)\) is solvable.}
\end{remark}

\vspace{0.5em}

\begin{proof}[Proof of Theorem \ref{mainteor2}]
\mbox{}

\noindent
\textbf{(F1)} 
Let \( (G, \cdot) \) be a connected, simply connected, non-linear Lie group.  
Our objective is to endow \( G \) with a second Lie group structure \( \circ \) such that \( (G, \cdot, \circ) \) forms {an LSB}, and the group \( (G, \circ) \) is linear.

To this end, we aim to show that there exist two linear Lie subgroups \( G_1, G_2 \subseteq G \) satisfying
\begin{equation}\label{scomp}
   G = G_1 G_2, \qquad G_1 \cap G_2 = \{e\}.
\end{equation}
Such a factorization allows us, via Example~\ref{exLSBzappa}, to define a second group operation \( \circ \) on \( G \) such that
\[
   (G, \circ) \cong G_1 \times G_2,
\]
where the right-hand side is endowed with the direct product structure, which is linear. 

In order to get \eqref{scomp} we use again the 
 Levi--Malcev decomposition. So let \( R \trianglelefteq G \) be a connected and simply connected solvable Lie subgroup,  and \( S \le G \) be  a connected and simply connected semisimple Lie {group}, such that
\[
G = RS, \qquad R \cap S = \{e\}.
\]
Moreover, by the \emph{Iwasawa decomposition} (see \cite[Th.~6.46]{Knapp2013lie}), we can write
\[
S = KAN,
\]
where \( AN \) is a connected and simply connected solvable Lie subgroup, and \( K = U \times V \), with \( U \cong \mathbb{R}^n \), \( V \) compact, connected, and simply connected. Furthermore, \( K \cap AN = \{e\} \).

Combining these, we obtain:
\[
G = RS = R(KAN) = (RK)(AN) = (RUV)(AN).
\]
Define
\[
G_1 := RUV, \qquad G_2 := AN.
\]
It remains to show that \( G_1 \cap G_2 = \{e\} \), and that \( G_1 \) and \( G_2 \) are linear Lie subgroups of \( G \). Let \( x \in G_1 \cap G_2 \). Then \( x = ruv \) for some \( r \in R \), \( u \in U \), \( v \in V \), and also \( x = an \) for some \( a \in A \), \( n \in N \). Hence,
\[
ruv = an \quad \Rightarrow \quad r = an (uv)^{-1} \in ANK.
\]
Since \(  KAN=S \le G \), we have \( ANK = KAN \), so \( r \in KAN \). But \( R \cap KAN = \{e\} \), thus \( r = e \), implying
$uv = an \in AN$.
However, \( uv \in K \), and \( K \cap AN = \{e\} \), hence \( uv = e \), and so \( x = e \). Therefore, \( G_1 \cap G_2 = \{e\} \). Next, we prove that both \( G_1 \) and \( G_2 \) are linear. Note that
$G_1 = RUV = (RU)V.$
Since \( R \trianglelefteq G \) and \( R \cap U = \{e\} \), we have \( RU \cong R \rtimes U \), which is connected, simply connected, and solvable. Also, \( V \) is compact and therefore real linearly reductive. Since $UV$ normalizes $R$,  
it follows that \( RU \trianglelefteq G_1 \), and hence
$G_1 = (RU) \rtimes V$.
By \cite[Th.~16.2.7]{Hilgert1952book}, such a group is linear. Moreover, \( G_2 = AN \) is connected, simply connected, and solvable, so it is linear by the same result.

\vspace{0.5em}
\noindent
\textbf{(F2)} 
Let \( (G,\circ) \) be a  connected and simply connected solvable Lie group.  
Combining Theorem  \ref{propLSBaff} with Theorem \ref{dekimpesolvable}, one can construct {an LSB} \( (G,\cdot,\circ) \) such that the Lie group \( (G,\cdot) \) is nilpotent.

\end{proof}

\begin{proof}[Proof of Theorem \ref{mainteor3}]
To justify each entry of  the table in Theorem \ref{mainteor}, we examine every pair \((G,\cdot)\) and \((G,\circ)\) in turn, constructing explicit non-trivial LSB structures for the boxes marked \(\cmark\) and proving non-existence or  triviality for those marked with a dash ($-$).

\medskip 

\paragraph{\textbf{Step 1. Positive entries (\(\cmark\)).}}
\mbox{}

\medskip

\noindent
{\bf 1. \emph{$(G,\cdot)$ ab, $(G,\circ)$ nil}}

\noindent
In \cite{Scheuneman1974}, using pre-Lie algebra techniques, it was shown that Milnor's question (see Subsection~\ref{affineactions}) has a positive answer for \(3\)-step nilpotent Lie groups.  
Combining this result with Theorem~\ref{propLSBaff}, we deduce that for every connected, simply connected nilpotent Lie group \((G,\circ)\) of step~\(3\), there exists another Lie group law \( \cdot \) on the same underlying manifold \( G \) such that the triple \((G,\cdot,\circ)\) forms an LSB structure, with \((G,\cdot)\) abelian.

\medskip

\noindent  
{\bf  2. \emph{$(G,\cdot)$ ab, $(G,\circ)$ solv}}

\noindent
Consider the affine group \( (G, \circ) = \mathrm{Aff}(\mathbb{R}, +) \), 
that is, the group of automorphisms of the additive group \( (\mathbb{R}, +) \). 
This group can be realized as the semidirect product
$(\mathbb{R}, +) \rtimes_\alpha (\mathbb{R}, +),$
where both factors are copies of the additive group of real numbers, 
and the action \( \alpha \colon (\mathbb{R}, +) \to \mathrm{Aut}(\mathbb{R}, +) \) is given by
$\alpha(a)(b) = e^a b.$
The group operation \( \circ \) on \( G \cong \mathbb{R}^2 \) is then explicitly defined by
\[
(a, b) \circ (a', b') = \left( a + a',\, b + e^a b' \right),
\]
for all \( a, a', b, b' \in \mathbb{R} \). The  group \( (G, \circ) \) is solvable but not nilpotent.
By Example~\ref{exLSBrevzappa}, we deduce that this structure induces {an LSB} on \( \mathbb{R}^2 \), denoted \( (\mathbb{R}^2, +, \circ) \), where \( + \) is the standard addition on \( \mathbb{R}^2 \) and \( \circ \) is the nontrivial solvable affine product defined above.
Similar examples can be constructed, via pre-Lie algebra techniques, whenever Milnor's question has a positive answer.

\medskip 

\noindent 
{\bf  3. \emph{$(G,\cdot)$ nil, $(G,\circ)$ ab}} 

\noindent
Let \((H_{3}, \cdot)\) be the three–dimensional Heisenberg group with multiplication
\eqref{heislaw}.
It is easily seen that $H_3=\mathbb{R}^2\rtimes_\alpha \mathbb{R}$ 
where 
\[
\alpha \;:\; (\mathbb{R},+) \;\longrightarrow\; \operatorname{Aut}\!\bigl(\mathbb{R}^{2},+\bigr)
          \;=\; \operatorname{GL}_{2}(\mathbb{R}),
\qquad
\alpha(x)\;=\;
\begin{pmatrix}
1 & 0\\[2pt]
x & 1
\end{pmatrix},
\quad x\in\mathbb{R}.
\]
By applying Example \ref{exLSBzappa} we obtain {an LSB}
$(\mathbb{R}^{3},\,\cdot,\,+),$
in which \((\mathbb{R}^{3},\cdot)\)  is {\em nilpotent} and non-abelian,
whereas \((\mathbb{R}^{3},+)\) is {\em abelian}.

\medskip

\noindent
{\bf  4. \emph{$(G,\cdot)$ nil, $(G,\circ)$ nil}}

\noindent
In {\cite[Proposition 4.1]{Burde2009}}, it is proved that for every \( n \leq 5 \), and for any pair of connected and simply connected nilpotent Lie groups \( (G, \circ_G) \) and \( (K, \cdot_K) \) of dimension \( n \), there exists a simply transitive affine action 
\[
\rho: (G, \circ_G) \rightarrow \mathrm{Aff}(K, \cdot_K).
\]
Therefore, by choosing \( (G, \circ_G) \) and \( (K, \cdot_K) \) to be two non-isomorphic connected and simply connected nilpotent Lie groups of dimension \( n \leq 5 \), and applying Theorem~\ref{propLSBaff}, we obtain a nontrivial LSB structure \( (G, \cdot, \circ) \), where both \( (G, \cdot) \) and \( (G, \circ) \) are connected and simply connected {\em nilpotent} non-abelian Lie groups.

\medskip

\noindent
{\bf  5. \emph{$(G,\cdot)$ nil,  $(G,\circ)$ solv}}

\noindent
Let \( (G, \circ) \) be any connected and simply connected solvable Lie group that is not nilpotent. 
Then, by applying statement \textbf{(F2)} of Theorem~\ref{mainteor2}, one can construct {an LSB} \( (G, \cdot, \circ) \) such that the group \( (G, \cdot) \) is nilpotent.  
\medskip

\noindent
{\bf  6. \emph{$(G,\cdot)$ solv, $(G,\circ)$ ab}}

\noindent
Consider the affine group \( (G, \cdot) = \mathrm{Aff}(\mathbb{R}, +) \cong (\mathbb{R}, +) \rtimes_\alpha (\mathbb{R}, +) \), 
i.e., the group of automorphisms of the additive group \( (\mathbb{R}, +) \), as in item 2. above. 
This time, however, we denote the affine group operation by \( \cdot \) to emphasize its role as the primary group structure. 
By applying Example~\ref{exLSBzappa}, we deduce that this setting induces {an LSB}  structure on \( \mathbb{R}^2 \), denoted \( (\mathbb{R}^2, \cdot, +) \), 
where \( (\mathbb{R}^2, \cdot) \) is a solvable Lie group and \( (\mathbb{R}^2, +) \) is abelian.

\medskip

\noindent
{\bf 7.  \emph{$(G,\cdot)$ solv, $(G,\circ)$ nil}}

\noindent
Let \(H_{3}\) be the three–dimensional Heisenberg group with multiplication \eqref{heislaw}.

For every \(t\in\mathbb{R}\) the map
\[
\alpha_{t}:(x,y,z)\longmapsto\bigl(e^{t}x,\,e^{t}y,\,e^{2t}z\bigr)
\]
is an automorphism, yielding a one–parameter homomorphism
\(\alpha:\mathbb{R}\to\operatorname{Aut}(H_{3})\).

With this action we set
$(G,\cdot)=\mathbb{R}\rtimes_{\alpha}H_{3},$
a four-dimensional Lie group that is solvable but not nilpotent.
In contrast, the direct product
$(G,\circ)=\mathbb{R}\times H_{3}$
is nilpotent (of step 2) and non-abelian.

By applying Example \ref{exLSBzappa} we obtain {an LSB}
$(\mathbb{R}^{4},\,\cdot,\,\circ),$
in which \((\mathbb{R}^{4},\cdot)\) is {\em solvable} and non-nilpotent,
whereas \((\mathbb{R}^{4},\circ)\) is {\em nilpotent} and non-abelian.

\medskip

\noindent
{\bf 8. \emph{$(G,\cdot)$ solv,  $(G,\circ)$ solv}}

\noindent
Following \cite[Example~2.19]{Letourmy}, let \(n\ge 3\) and let
$G := UT_n^{+}(\mathbb{R})$
be the Lie group of invertible upper triangular \(n\times n\) real matrices with positive diagonal
entries. Every \(A\in G\) can be written uniquely as
\[
A = N(A) + D(A),
\]
where \(N(A)\) is strictly upper triangular and \(D(A)\) is diagonal. Let \(\circ\) denote the usual
matrix multiplication on \(G\), and define a second operation \(\cdot\) on \(G\) by
\[
A\cdot B := N(A) + BD(A) \qquad (A,B\in G).
\]
It is straightforward to check that \(\cdot\) is a Lie group law on \(G\), and that
\((G,\cdot,\circ)\) is an LSB, where \((G,\circ)\) is the standard solvable (non-nilpotent) upper
triangular Lie group.
Moreover, one can show that the commutator in \((G,\cdot)\) is given by
\[
[A,B]_{\cdot} = \mathrm{I}_n +N(B)\bigl(D(A)-\mathrm{I}_n\bigr)-N(A)\bigl(D(B)-\mathrm{I}_n\bigr).
\]
Thus, one can verify that \((G,\cdot)\) is solvable but not nilpotent.
 Finally, one easily checks that
\((G,\cdot)\) is metabelian: indeed, its commutator subgroup is
\[
\{\,\mathrm{I}_n+N \mid N \text{ is strictly upper triangular}\,\},
\]
which is abelian with respect to \(\cdot\). On the other hand, \((G,\circ)\) is not metabelian for
\(n\ge 3\), since its commutator subgroup is the unitriangular group, which is non-abelian. Hence
\((G,\cdot)\not\simeq (G,\circ)\), and we obtain a non-trivial LSB \((G,\cdot,\circ)\) with both
groups solvable and not nilpotent.

\medskip

\noindent
{\bf 9.  \emph{$(G,\cdot)$ simple,  $(G,\circ)$ solv}} and {\bf \emph{$(G,\cdot)$ ssimple,   $(G,\circ)$ solv}} 

\noindent
Consider the simple Lie group \( \left( \mathrm{SL}_2(\mathbb{R}), \cdot \right) \), where \( \cdot \) denotes the usual matrix multiplication. Its Iwasawa decomposition takes the form
$\mathrm{SL}_2(\mathbb{R}) = KAN,$
where:
\begin{itemize}
  \item \( K = \mathrm{SO}(2) = \left\{ \begin{pmatrix}
      \cos\theta & -\sin\theta \\
      \sin\theta & \cos\theta
    \end{pmatrix} : \theta \in \mathbb{R} \right\} \)
  is a maximal compact subgroup;
  
  \item \( A = \left\{ \begin{pmatrix}
      a & 0 \\
      0 & a^{-1}
    \end{pmatrix} : a > 0 \right\} \)
  is a one-parameter abelian subgroup consisting of diagonal matrices with positive entries;
  
  \item \( N = \left\{ \begin{pmatrix}
      1 & x \\
      0 & 1
    \end{pmatrix} : x \in \mathbb{R} \right\} \)
  is  isomorphic to the additive group \( (\mathbb{R}, +) \).
\end{itemize}
The subgroup \( K \cong \mathbb{S}^1 \) is compact, connected, and abelian, and therefore nilpotent. The product \( AN \) forms a connected, simply-connected, solvable Lie subgroup. Moreover, \( K \cap AN = \{ I_2 \} \), where \( I_2 \) denotes the \( 2 \times 2 \) identity matrix.
Hence, Example~\ref{exLSBzappa} applies, yielding {an LSB} structure
$\left( \mathrm{SL}_2(\mathbb{R}), \cdot, \circ \right),
$
 where
\( \left( \mathrm{SL}_2(\mathbb{R}), \cdot \right) \) is  simple, while
\( \left( \mathrm{SL}_2(\mathbb{R}), \circ \right) \cong K \times AN \) is solvable.
Finally, taking the product of the previous LSB with itself
(cf.\ Example~\ref{exLSBprod}) yields
$\bigl(\mathrm{SL}_{2}(\mathbb{R})\times\mathrm{SL}_{2}(\mathbb{R}){,} 
 \,\cdot,\,\circ\bigr)$,
where
$\bigl(\mathrm{SL}_{2}(\mathbb{R})\times\mathrm{SL}_{2}(\mathbb{R}),\,\cdot\bigr)$
is semisimple
and 
$(\mathrm{SL}_{2}(\mathbb{R})\times\mathrm{SL}_{2}(\mathbb{R}),\,\circ\bigr)$
is solvable.

\medskip

\noindent
{\bf 10.   \emph{$(G,\cdot)$ simple,  $(G,\circ)$ mixed-type}}  and {\bf \emph{$(G,\cdot)$ ssimple,  $(G,\circ)$ mixed-type}}

\item
Consider the simple Lie group \( \bigl( \mathrm{SL}_3(\mathbb{R}), \cdot \bigr) \),
with the usual matrix multiplication.  
Its Iwasawa decomposition is
$\mathrm{SL}_3(\mathbb{R}) \;=\; K\,A\,N,$
where
\begin{itemize}
  \item \(
        K \;=\; \mathrm{SO}(3)\)
               is a maximal compact subgroup (compact, connected, \emph{semisimple}, non-abelian);

  \item \(
        A
        \;=\;
        \Bigl\{
          \operatorname{diag}(a_1,a_2,a_3)
          \,\bigm|\,
          a_i>0,\;
          a_1a_2a_3=1
        \Bigr\}
        \cong (\mathbb{R}_{>0})^2
        \)
        is a two-parameter abelian subgroup of positive diagonal matrices;

  \item \(
        N
       \)
        is a three-dimensional, step-two nilpotent subgroup
        (isomorphic to the  Heisenberg group \(H_{3}\)).
\end{itemize}

The subgroup \( K\cong \mathrm{SO}(3) \) is compact and semisimple,
while the product \( AN \) is connected, simply connected, and solvable.  
Moreover,
\( K\cap AN = \{I_3\} \), where \( I_3 \) denotes the \(3\times3\) identity.

Hence Example \ref{exLSBzappa} applies and yields {an LSB}
structure $\bigl(\mathrm{SL}_3(\mathbb{R}),\,\cdot,\,\circ\bigr),$
such that
\( \bigl(\mathrm{SL}_3(\mathbb{R}),\cdot\bigr) \) is simple,
whereas
$\bigl(\mathrm{SL}_3(\mathbb{R}),\circ\bigr)
\;\cong\;
K \times AN$
is of \emph{mixed type}: it contains both a non-abelian semisimple factor
(\(K\)) and a solvable factor (\(AN\)), so it is neither solvable, nor simple,
nor semisimple. Analogously with the previous example by taking 
the product of  $\bigl(\mathrm{SL}_3(\mathbb{R}),\,\cdot,\,\circ\bigr),$
with itself one gets  {an LSB} such that 
$\bigl(\mathrm{SL}_{3}(\mathbb{R})\times\mathrm{SL}_{3}(\mathbb{R}),\,\cdot\bigr)$
is semisimple
and 
$(\mathrm{SL}_{3}(\mathbb{R})\times\mathrm{SL}_{3}(\mathbb{R}),\,\circ\bigr)$
is of mixed-type.

\medskip 

\noindent
{\bf 11.  \emph{$(G,\cdot)$ mixed-type,  $(G,\circ)$ solv}}

\smallskip

\noindent
Let \(\bigl(\mathrm{SL}_{2}(\mathbb{R}),\cdot,\circ\bigr)\) be the
LSB constructed in~\textbf{(9)}, for which
\(\bigl(\mathrm{SL}_{2}(\mathbb{R}),\cdot\bigr)\) is \emph{simple}
whereas \(\bigl(\mathrm{SL}_{2}(\mathbb{R}),\circ\bigr)\) is
\emph{solvable}.  
Form the direct product with the abelian LSB
\((\mathbb{R},+,+)\) as in Example~\eqref{exLSBprod}.  
The resulting LSB
$\bigl(\,\mathrm{SL}_{2}(\mathbb{R})\times\mathbb{R},\;
        \cdot,\;\circ\,\bigr)$
 is such that 
$\bigl(\mathrm{SL}_{2}(\mathbb{R})\times\mathbb{R},\cdot\bigr)$
is of {\em mixed-type} and 
$\bigl(\mathrm{SL}_{2}(\mathbb{R})\times\mathbb{R},\circ\bigr)$
 is \emph{solvable}.

\medskip 

\noindent
{\bf 12.  \emph{$(G,\cdot)$ mixed-type,  $(G,\circ)$ mixed-type}}

\noindent
Let $(G,\cdot)=\mathbb{R}^{3}\rtimes_{\operatorname{Ad}}\mathrm{SU}(2)$,
where $\operatorname{Ad}\;:\;\mathrm{SU}(2)\;\longrightarrow\;\operatorname{Aut}(\mathbb{R}^{3})$
is the adjoint representation,
\emph{namely}
\[
\operatorname{Ad}_{u}(v)=u\,v\,u^{-1},
\qquad
u\in\mathrm{SU}(2),\;v\in\mathbb{R}^{3}\cong\mathfrak{su}(2)=\mathrm{Lie}(SU(2)).
\]
By applying Example \ref{exLSBzappa} we obtain {an LSB}
$(\mathbb{R}^{3}\times\mathrm{SU}(2), \cdot, \circ)$
where  both $(\mathbb{R}^{3}\times\mathrm{SU}(2), \cdot )$
and $(\mathbb{R}^{3}\times\mathrm{SU}(2), \circ )$ are of {\em mixed-type} and not isomorphic.

\medskip

\noindent
\textbf{Step 2. The trivial cases \(({\cong})\).} 
A connected LSB \(\bigl(G,\cdot,\circ\bigr)\) is \emph{trivial} whenever the two
underlying Lie group structures \(\bigl(G,\cdot\bigr)\) and
\(\bigl(G,\circ\bigr)\) are both abelian.  
Indeed, on a connected manifold any two abelian Lie
group laws are isomorphic.
The only other trivial situations are those in which
\(\bigl(G,\cdot\bigr)\) and \(\bigl(G,\circ\bigr)\) are simultaneously
semisimple (or even simple).  
These follow directly from condition~\(\mathbf{(R2)}\) in
Theorem~\ref{mainteor}.

\medskip
\noindent
\textbf{Step 3. The non-existence cases \(({-})\).}  
Every remaining entry marked by a dash \(({-})\) represents a
\emph{non-existence} phenomenon.  
Such combinations are ruled out by
\(\mathbf{(S2)}\) together with \(\mathbf{(R1)}\) of
Theorem~\ref{mainteor}.

\end{proof}

{\section*{Acknowledgements}
We would like to thank Giovanni Placini and Fabio Zuddas for carefully reading our manuscript and for their valuable comments and suggestions. We also thank the anonymous referee for a thorough reading and for helpful remarks that improved the exposition of the paper. In particular, we are grateful to the referee for suggesting the inclusion of item~8. in the proof of Theorem \ref{mainteor3}.

\end{document}